\theoremstyle{definition}
\newtheorem{definition}{Definition}[section]
\newtheorem{example}[definition]{Example}
\theoremstyle{plain}
\newtheorem{theorem}{Theorem}
\newtheorem*{ack}{Acknowledgment}
\newtheorem{lemma}[definition]{Lemma}
\newtheorem{corollary}[definition]{Corollary}
\newtheorem{proposition}[definition]{Proposition}
\theoremstyle{remark}
\theoremstyle{dotless}
\newcommand{\N}{\mathbb{N}}
\newcommand{\Q}{\mathbb{Q}}
\newcommand{\R}{\mathbb{R}}
\newcommand{\Rop}{[0,\infty]}
\newcommand{\eps}{\varepsilon}
\renewcommand{\rho}{\varrho}
\newcommand{\pphi}{\hat\phi}
\newcommand{\ppi}{\hat\pi}
\newcommand{\ppsi}{\hat\psi}
\newcommand{\ovlc}{\overline c}
\newcommand{\unlc}{\underline c}
\newcommand{\me}{^{-1}}
\newcommand{\push}{{}_{\#}}
\renewcommand{\subset}{\subseteq}
\newcommand{\Assumptions}{Assume that $X,Y$ are Polish spaces equipped with Borel probability
 measures $\mu,\nu$, that $c:X\times Y\to [0,\infty]$ is Borel measurable
 and $\mu\otimes\nu$-a.e.\ finite and that there exists a finite transport plan.}
\title{Duality for Borel measurable cost functions}
 \author{Mathias Beiglb\"ock}
 \author{Walter Schachermayer}
\address{Institut f\"ur Diskrete Mathematik und Geometrie, Technische Universit\"at Wien\endgraf
Wiedner Hauptstra\ss e 8-10/104\\ 1040 Wien, Austria}
\email{mathias.beiglboeck@tuwien.ac.at}
\address{Institut f\"ur Wirtschaftsmathematik, Technische Universit\"at Wien\endgraf
Wiedner Hauptstra\ss e 8-10/105, 1040 Wien, Austria}
\email{walter.schachermayer@tuwien.ac.at}
\thanks{The first author gratefully acknowledges financial support from the Austrian Science
Fund (FWF) under grant S9612. The second author gratefully
acknowledges financial support from the Austrian Science Fund (FWF)
under grant P19456, from the Vienna Science and Technology Fund
(WWTF) under grant MA13 and from the Christian Doppler Research
Association (CDG)}
\subjclass[2000]{49K27, 28A05} \keywords{Monge-Kantorovich problem,
Monge-Kantorovich Duality, $c$-cyclical monotonicity, measurable cost
function}
\begin{document}

\begin{abstract}
We consider the Monge-Kantorovich transport problem in an abstract
measure theoretic setting. 
Our main result states that duality holds if $c:X\times Y\to
[0,\infty)$ is an arbitrary Borel measurable
cost function on the product of Polish spaces $X,Y$.
In the course of the proof we show how to relate a non - optimal
transport plan to the optimal transport costs via a ``subsidy''
function and how to identify the dual optimizer. We also provide some examples showing the limitations  of
the duality relations.
\end{abstract}
\maketitle
\section{Introduction} We consider the \emph{Monge-Kantorovich transport
problem} for Borel probability measures  $\mu,\nu$ on  Polish spaces
$X,Y$. See \cite{RaRu98,Vill03,Vill05} for  a general account of the
theory of optimal transportation.
 The \emph{cost function}  $c:X\times Y\to [0,\infty]$ is
assumed to be Borel measurable. $\Pi(\mu,\nu)$ is the set of all
\emph{transport plans}, that is,  Borel probability measures on
$X\times Y$ which have $X$-marginal $\mu$ and $Y$-marginal $\nu$.
The \emph{transport costs} associated to a transport plan $\pi$  are
given by
\begin{equation}I_c[\pi]=\int_{X\times Y}c\,d\pi\end{equation} and we
say that $\pi$ is a \emph{finite} transport plan if
$I_c[\pi]<\infty$. The infimum over all possible transport costs
$I_c[\pi],\pi\in\Pi(\mu,\nu)$ will be denoted by $I_c$.
 We
define $\Phi(\mu,\nu)$ as the set of pairs $(\phi,\psi)$ of
integrable functions $\phi:X\to[-\infty,\infty)$ and
$\psi:Y\to[-\infty,\infty)$ which satisfy $\phi(x)+\psi(y)\leq
c(x,y)$ for all $(x,y)\in X\times Y$. The dual part of the
Monge-Kantorovich problem then consists in maximizing
\begin{equation}\label{SimpleJ}
J(\phi,\psi)=\int_{X}\phi\,d\mu+\int_{Y}\psi\,d\nu
\end{equation}
for $(\phi,\psi)\in\Phi(\mu,\nu)$. Monge-Kantorovich duality asserts
that $\inf \{I_c[\pi]:\pi\in
\Pi(\mu,\nu)\}=\sup\{J(\phi,\psi):(\phi,\psi)\in\Phi(\mu,\nu)\}$.
For example, if $X$ and $Y$ consist of $N$ points, each having
measure $1/N$, duality holds true, as this reduces to an elementary
linear programming problem. In the literature duality has been
established under various conditions, see for instance \cite[p
98f]{Vill05} for a short overview. In particular it is known that
duality holds if $c$ is lower semi-continuous (see \cite[Theorem
2.2]{Kell84} or \cite[Theorem 5.10]{Vill05}) or just Borel
measurable and bounded by the sum of two integrable functions
(\cite[Theorem 2.14]{Kell84}).  In \cite{RaRu95,RaRu96} the problem
is investigated beyond the realm of Polish spaces and it is
characterized for which spaces duality holds for all bounded
measurable cost functions.

Our main result is that Monge-Kantorovich duality holds in the case
of a finite but not necessarily bounded Borel measurable cost
function.
\begin{theorem}\label{BorelDuality}
\Assumptions\ Then
\begin{align}\label{DualityEquation}
I_c=\inf \{I_c[\pi]:\pi\in
\Pi(\mu,\nu)\}=\sup\{J(\phi,\psi):(\phi,\psi)\in\Phi(\mu,\nu)\}.
\end{align}
\end{theorem}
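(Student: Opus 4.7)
The easy inequality $\sup_{(\phi,\psi)\in\Phi(\mu,\nu)} J(\phi,\psi) \leq \inf_{\pi\in\Pi(\mu,\nu)} I_c[\pi]$ is immediate: integrating $\phi(x)+\psi(y)\leq c(x,y)$ against any $\pi\in\Pi(\mu,\nu)$ and using the marginal condition yields $J(\phi,\psi)\leq I_c[\pi]$. The content of the theorem is the reverse inequality, and this is what I plan to address.

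My plan is to reduce the problem to the bounded case by truncating the cost. Setting $c_N:=c\wedge N$, the truncated cost is bounded and Borel, so the bounded Borel duality of Kellerer (\cite[Theorem 2.14]{Kell84}) produces, for each $N$, a pair $(\phi_N,\psi_N)\in\Phi(\mu,\nu)$ with $\phi_N(x)+\psi_N(y)\leq c_N(x,y)\leq c(x,y)$ and
\[
J(\phi_N,\psi_N)\geq \inf_{\pi\in\Pi(\mu,\nu)} I_{c_N}[\pi]-1/N.
\]
Because $c_N\leq c$, each $(\phi_N,\psi_N)$ is automatically $c$-admissible, hence $\sup J\geq \lim_N \inf_\pi I_{c_N}[\pi]$, and the theorem would follow from the primal convergence $\lim_N \inf_\pi I_{c_N}[\pi]=I_c$.

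The inequality $\inf_\pi I_{c_N}[\pi]\leq I_c$ is trivial, but the reverse direction is the principal obstacle. The naive strategy---choose near-optimizers $\pi_N$ for $c_N$, extract a weak limit $\pi^\ast\in\Pi(\mu,\nu)$ by Prokhorov, and invoke lower semicontinuity of $\pi\mapsto I_c[\pi]$---fails because $c$ is merely Borel, so Portmanteau does not apply. I expect this to be overcome via the \emph{subsidy} device announced in the abstract: starting from a possibly non-optimal finite transport plan, one constructs a Borel function quantifying the deviation from $c$-cyclical monotonicity, and from this subsidy one reads off integrable one-variable potentials $(\phi,\psi)\in\Phi(\mu,\nu)$ directly, bypassing any need for weak continuity of the primal functional.

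The hardest step should therefore be the measurable construction of these potentials together with the proof of their integrability, carried out uniformly enough along the truncation $c_N$ to ensure that the resulting $J(\phi,\psi)$ matches $I_c$ in the limit. Since the paper alludes to ``identifying the dual optimizer'', I would expect the proof not to be a soft compactness argument on the dual side either, but rather an explicit construction in which the $c$-cyclical monotonicity of near-optimal plans plays the central role; verifying integrability of the resulting potentials against $\mu$ and $\nu$---in the absence of any growth condition on $c$---is the place where the hypothesis of a finite transport plan will have to be exploited.
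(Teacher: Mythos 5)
Your identification of the easy inequality is fine, but the substantive part of your plan contains a step that does not work as a reduction and a prediction about the shape of the proof that the paper's own examples refute. You propose to truncate the cost, apply Kellerer's bounded Borel duality to $c_N=c\wedge N$, and then conclude via the primal convergence $\lim_N \inf_\pi I_{c_N}[\pi]=I_c$. That convergence, however, is not easier than the duality theorem: given Kellerer's bounded result it is \emph{equivalent} to it, and the authors state explicitly that they do not know how to prove $I_{c\wedge n}\uparrow I_c$ directly --- it appears only as Corollary \ref{IcContinuous}, \emph{derived from} Theorem \ref{BorelDuality} (take bounded $\phi,\psi$ with $J(\phi,\psi)>I_c-\eps$; for large $n$ one has $\phi+\psi\leq c\wedge n$, whence $I_{c\wedge n}\geq I_c-\eps$). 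So your reduction merely restates the problem, and you then defer the entire content to an unexecuted appeal to ``the subsidy device''.

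Moreover, your expectation that the subsidy construction yields \emph{integrable} potentials in $\Phi(\mu,\nu)$, with integrability extracted from the existence of a finite transport plan, is wrong: Examples \ref{NoIntegrableMaximizers} and \ref{NoIntegrableMaximizers2} show that integrable dual maximizers need not exist even for $c(x,y)=(x-y)^2$ on $\R$. The actual argument is structured differently. One fixes a finite plan $\pi$, uses Kellerer's \emph{multi-marginal} duality (Theorem \ref{KellererDuality}) together with a cyclical-rerouting estimate (Lemma \ref{KellererMeasure}) and a Koml\'os limit over cycle lengths to produce a subsidy $f\geq 0$ with $\int f\,d\pi\leq I_c[\pi]-I_c$ satisfying (\ref{S5}); then the sandwich result (Proposition \ref{SandwichTheorem}), whose measurability repairs via analytic sets are a genuine part of the work, produces possibly non-integrable $\phi,\psi$ with $\phi+\psi\leq c$ everywhere and $\phi+\psi\geq c-f$ a.e.\ $\pi$. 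The functional $J$ is then defined by integrating $\phi(x)+\psi(y)$ against $\pi$ (Lemma \ref{IndependenceOfPi} shows this is well posed), and the classical supremum over $\Phi(\mu,\nu)$ is recovered by truncating the \emph{potentials}, not the cost. None of this machinery is present in your proposal, so the proof has not been carried out.
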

In contrast to the situation where $c$ is lower semi-continuous,
duality does not hold in general in the Borel setting if $c$ attains
the value $\infty$ on a large set, see Example \ref{ZeroOneInfty}.

\subsection{Existence of dual maximizers}

\medskip

In general it is not possible to find dual maximizers $\pphi$ and
$\ppsi$ for (\ref{SimpleJ}) which are \emph{integrable}, even if the
cost function is the squared distance on $\mathbb R$ (see Example
\ref{NoIntegrableMaximizers2} below). However it is possible to find
dual maximizers in a weaker sense for all  Borel measurable cost
functions which are $\mu\otimes\nu$-a.e.\ finite. Assume that $\pi$
is a finite transport plan and that $\phi,\psi$ are arbitrary
functions satisfying $\phi(x)+\psi(y)\leq c(x,y)$. While
$\phi(x)+\psi(y)$ is not necessarily integrable with respect to
$\mu\otimes \nu$, it can be integrated with respect to $\pi$
(possibly assuming the value $-\infty$). Thus we may well define
\begin{align}\label{ExtendedDefinition}
J(\phi,\psi)=\int_{X\times Y}\big[ \phi(x)+\psi(y)\big]\,d\pi(x,y).
\end{align}
The subsequent lemma shows that the notation $J(\phi,\psi)$ is justified in the sense  that this
definition does not depend on the particular choice of $\pi$.

 \begin{lemma}\label{IndependenceOfPi} Let $X,Y$ be Polish spaces equipped
 with Borel probability measures $\mu,\nu$ and let $c:X\times Y\to[0,\infty]$ be a
  Borel measurable cost  function.
Let $\pi,\tilde\pi \in \Pi(\mu,\nu)$ be finite transport plans and assume
 that $\phi:X\to [-\infty,\infty),\psi:Y\to [-\infty,\infty)$ are such that
  $\phi(x)+\psi(y)\leq c(x,y)$  
  holds $\pi$-almost surely as well as $\tilde \pi$-almost surely.
 Then \begin{align}
 \int_{X\times Y} \big[\phi(x)+\psi(y)\big]\,d\pi(x,y)=
 \int_{X\times Y}\big[ \phi(x)+\psi(y)\big]\,d\tilde\pi(x,y).\end{align}
 Moreover  there is a sequence $(\phi_n,\psi_n)_{n\geq 1} $
 of bounded functions in $\Phi(\mu,\nu)$ such that $\lim_{n\to\infty}
  J(\phi_n,\psi_n)=\int_{X\times Y} \big[\phi(x)+\psi(y)\big]\,d\pi(x,y)$.
\end{lemma}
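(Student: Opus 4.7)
I would prove the two claims by reducing them both to a single construction: a sequence of bounded pairs $(\phi_n,\psi_n)\in\Phi(\mu,\nu)$ such that $J(\phi_n,\psi_n)\to\int(\phi+\psi)\,d\pi$. From this the first claim follows at once, because for bounded admissible pairs Fubini gives $J(\phi_n,\psi_n)=\int_X\phi_n\,d\mu+\int_Y\psi_n\,d\nu$, which is independent of the transport plan; so if the same construction applied to $\tilde\pi$ (or equivalently, if the same sequence also approximates the $\tilde\pi$-integral) produces the same limit, the two integrals must agree.

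Before starting, note that $\int(\phi+\psi)\,d\pi$ is well defined in $[-\infty,\infty)$: on the full $\pi$-measure set $G=\{\phi+\psi\le c\}$ the positive part $(\phi+\psi)^+$ is dominated by $c\in L^1(\pi)$. For the approximation I would take the two-sided truncation $\phi^{(n)}=(\phi\vee(-n))\wedge n$ and $\psi^{(n)}=(\psi\vee(-n))\wedge n$. These are bounded and Borel but in general violate the pointwise constraint $\phi^{(n)}+\psi^{(n)}\le c$: raising a value of $\phi$ from $-\infty$ up to $-n$ at a place where $\psi^{(n)}$ is large can produce a violation, although the original inequality holds $\pi$-a.s. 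To repair this I would pass to the $c$-conjugate-type modification
\[
\tilde\phi^{(n)}(x):=\phi^{(n)}(x)\wedge\inf_{y\in Y}\bigl(c(x,y)-\psi^{(n)}(y)\bigr),
\]
which is universally measurable (as an analytic-type infimum over the Polish space $Y$), bounded, and yields $(\tilde\phi^{(n)},\psi^{(n)})\in\Phi(\mu,\nu)$ by construction.

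The main obstacle is to show $J(\tilde\phi^{(n)},\psi^{(n)})\to\int(\phi+\psi)\,d\pi$. The difficulty is that the correction $\tilde\phi^{(n)}-\phi^{(n)}\le 0$ can in principle be large: the infimum in the definition of $\tilde\phi^{(n)}$ is taken over \emph{all} $y\in Y$, while the hypothesis only controls pairs $(x,y)$ along the $\pi$-support. I would address this by a disintegration $\pi(dx,dy)=\mu(dx)\,\pi_x(dy)$, which shows that for $\mu$-a.e.\ $x$, $\phi^{(n)}(x)+\psi^{(n)}(y)\le c(x,y)$ holds for $\pi_x$-a.e.\ $y$, and then to argue via monotone and dominated convergence---with $c$ playing the role of the dominating envelope on the positive side and the $\pi$-integrability of $(\phi+\psi)^-$ (when finite) controlling the negative side---that the correction integrates to a quantity tending to $0$. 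Once this convergence is established, plan-independence of $\int(\phi+\psi)\,d\pi$ follows automatically from the Fubini identity noted in the first paragraph, and the degenerate case $\int(\phi+\psi)\,d\pi=-\infty$ is handled by the trivial construction $\phi_n\equiv-n$, $\psi_n$ a suitable constant.
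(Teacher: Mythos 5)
There is a genuine gap, and it originates in a false premise. You claim that the symmetric truncations $\phi^{(n)}=(\phi\vee(-n))\wedge n$, $\psi^{(n)}=(\psi\vee(-n))\wedge n$ can violate $\phi^{(n)}+\psi^{(n)}\le c$ because raising $\phi$ from $-\infty$ up to $-n$ at a point where $\psi^{(n)}$ is large may overshoot $c$. It cannot: $\psi^{(n)}\le n$ by construction, so wherever the lower truncation is active one has $\phi^{(n)}(x)+\psi^{(n)}(y)\le -n+n=0\le c(x,y)$, and wherever neither lower truncation is active one has $\phi^{(n)}(x)+\psi^{(n)}(y)\le\phi(x)+\psi(y)\le c(x,y)$ at every point where the original inequality holds. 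This is exactly why one truncates both functions at the \emph{same} symmetric levels $\pm n$: the truncated pair is admissible with no repair. Your $c$-conjugate modification is therefore unnecessary, and it is also what sinks the rest of the argument.

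Having introduced $\tilde\phi^{(n)}(x)=\phi^{(n)}(x)\wedge\inf_{y}\bigl(c(x,y)-\psi^{(n)}(y)\bigr)$, you correctly flag the control of the correction $\tilde\phi^{(n)}-\phi^{(n)}$ as ``the main obstacle,'' but you never overcome it: the infimum ranges over \emph{all} $y\in Y$, while the hypothesis controls $\phi+\psi\le c$ only $\pi$-a.s., so a single off-support point $y_0$ with $\psi(y_0)$ large relative to $c(\cdot,y_0)$ depresses $\tilde\phi^{(n)}(x)$ for every $x$ simultaneously, and no disintegration of $\pi$ can see this. The appeal to ``monotone and dominated convergence'' for the correction is a plan, not an argument. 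Moreover, even for the plain truncation the needed convergence $\int(\phi^{(n)}+\psi^{(n)})\,d\pi\to\int(\phi+\psi)\,d\pi$ is nowhere established. The paper's proof consists of exactly this step: writing $\xi_n=\phi_n+\psi_n$ and $\xi=\phi+\psi$, one checks that $\xi_n\downarrow\xi$ on $\{\xi\le0\}$ and $\xi_n\uparrow\xi$ on $\{\xi\ge0\}$ with domination by $c\in L^1(\pi)$ on the latter set, so both halves of $\int\xi_n\,d\pi$ converge; combined with the marginal identity $\int\xi_n\,d\pi=\int\phi_n\,d\mu+\int\psi_n\,d\nu=\int\xi_n\,d\tilde\pi$ (the one part of your outline that is right), this yields both assertions of the lemma at once.
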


Using Lemma \ref{IndependenceOfPi}, Theorem \ref{BorelDuality}
follows from Theorem \ref{DualMaximizers} below. Theorem
\ref{DualMaximizers} is stronger in the sense that it guarantees
that the supremum on the right side of (\ref{DualityEquation}) is in
fact a maximum if $J(\phi,\psi)$ is defined as in
(\ref{ExtendedDefinition}).

\begin{theorem}\label{DualMaximizers}
\Assumptions\ Then  there exist Borel measurable
\emph{dual maximizers} $\pphi,\ppsi$, i.e.\ functions $\pphi:X\to [-\infty,\infty),\ppsi:Y\to
[-\infty,\infty)$ satisfying $\pphi(x)+\ppsi(y)\leq c(x,y)$ for all
$(x,y)\in X\times Y$ such that
\begin{align}\label{DualityEquation2}
I_c= \inf \{I_c[\pi]:\pi\in \Pi(\mu,\nu)\}=J\big(\pphi,\ppsi\big).
\end{align}
\end{theorem}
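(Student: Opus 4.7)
The plan is to combine $c$-cyclical monotonicity of an optimal plan with a Rockafellar-type construction of dual potentials, then invoke Lemma~\ref{IndependenceOfPi} to evaluate $J(\pphi,\ppsi)$ against that plan.

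First I would establish the existence of an optimizer $\pi^*\in\Pi(\mu,\nu)$ with $I_c[\pi^*]=I_c$. Since $c$ is only Borel, not lower semi-continuous, the usual weak-compactness argument for $\Pi(\mu,\nu)$ does not directly yield attainment, so I would apply duality for bounded Borel costs (\cite[Theorem 2.14]{Kell84}) to the truncations $c_n:=c\wedge n$ and pass to a limit along a minimizing sequence via a Koml\'os or fusion argument, producing an optimal $\pi^*$ concentrated on $\{c<\infty\}$. A standard cycle-swapping argument then shows that $\pi^*$ is carried by a Borel $c$-cyclically monotone set $\Gamma\subseteq\{c<\infty\}$.

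Fixing a base point $(x_0,y_0)\in\Gamma$, I would define
\begin{equation*}
\pphi(x):=\inf\sum_{i=1}^{n}\bigl[c(x_i,y_i)-c(x_{i-1},y_i)\bigr],
\end{equation*}
where the infimum ranges over finite chains $(x_1,y_1),\ldots,(x_{n-1},y_{n-1})\in\Gamma$ with $x_n=x$, and $\ppsi(y):=\inf_{x\in X}[c(x,y)-\pphi(x)]$. The inequality $\pphi(x)+\ppsi(y)\leq c(x,y)$ is then built in, while $c$-cyclical monotonicity of $\Gamma$ forces equality on $\Gamma$. Consequently
\begin{equation*}
J(\pphi,\ppsi)=\int_{X\times Y}[\pphi(x)+\ppsi(y)]\,d\pi^*=\int c\,d\pi^*=I_c
\end{equation*}
in the extended sense of (\ref{ExtendedDefinition}), and Lemma~\ref{IndependenceOfPi} guarantees that this value is independent of the representing plan.

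The crux of the proof, and the step I expect to be hardest, is the Borel measurability of $\pphi,\ppsi$. The formula above yields an a priori only upper-analytic function, since the infimum is over an uncountable family of finite chains parameterised by a Borel set; typical $c$-conjugate constructions therefore live only in the universally measurable $\sigma$-algebra. I would address this by writing $\Gamma=\bigcup_j K_j$ as a countable union of compact pieces on which the relevant slices of $c$ are bounded and sufficiently regular, reducing the infimum to a countable one by a Jankov--von~Neumann measurable selection, and finally redefining $\pphi,\ppsi$ to be $-\infty$ on the $\mu$- resp.\ $\nu$-null Borel sets where only universal (not Borel) measurability can be secured. Because the value $-\infty$ never violates $\pphi+\ppsi\leq c$, and because modifications on null sets leave $J(\pphi,\ppsi)$ invariant, one obtains Borel dual maximizers satisfying (\ref{DualityEquation2}).
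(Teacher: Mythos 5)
The decisive gap is your first step: you assume you can produce an optimal plan $\pi^*$ with $I_c[\pi^*]=I_c$, and the entire Rockafellar-type construction hangs on the $c$-cyclically monotone set $\Gamma$ carrying that plan. For a merely Borel (not lower semi-continuous) cost the primal infimum need not be attained, and no Koml\'os, Ces\`aro-mean, or fusion argument can rescue this: without lower semi-continuity the map $\pi\mapsto I_c[\pi]$ is not lower semi-continuous along weakly (or even setwise) convergent minimizing sequences, and the failure is genuine rather than technical. The paper's own example at the start of Section 3 ($X=Y=[0,1]$ with Lebesgue measure, $c(x,y)=(x-y)^2$ for $x\neq y$ and $c(x,x)=1$) has $I_c=0$ unattained, hence admits \emph{no} optimal and no $c$-cyclically monotone transport plan, yet dual maximizers exist ($\pphi\equiv\ppsi\equiv 0$). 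So a proof of Theorem \ref{DualMaximizers} cannot start from an optimizer.

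This is precisely what the subsidy machinery in the paper is designed to circumvent. One starts from an \emph{arbitrary} finite plan $\pi$, uses Kellerer's multi-marginal duality (Proposition \ref{MonotonicityViaOptimality}, via Lemma \ref{KellererMeasure} and Corollary \ref{KellererCorollary}) to build $f\geq 0$ with $\int f\,d\pi= I_c[\pi]-I_c$ such that the pair $\ovlc=c$, $\unlc=c-f$ satisfies the cyclical inequality (\ref{SandwichMonotonicity}), and then applies the sandwich result (Proposition \ref{SandwichTheorem}) to extract $\phi,\psi$ with $\unlc\leq\phi+\psi\leq\ovlc$. Your closing remarks on measurability (upper-analytic infima, modification by $-\infty$ on null sets) are in the right spirit and essentially mirror what the paper does with Lemmas \ref{InfIsMeasurable} and \ref{borelversion}, but they are applied to the wrong object; as written, the argument does not get off the ground.
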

We point out that the hypothesis that $c$ is $\mu\otimes\nu$-a.e.\
finite is crucial  for the existence of dual optimizers (see Example
\ref{ExAmPra}). Even in the case of a continuous cost function
$c:X\times Y\to [0, \infty]$, it is in general not possible to find
dual maximizers  although duality holds in this case (see Example
\ref{ContinuousNoMaximizers}).

\medskip

In \cite[Theorem 3.2]{AmPr03} it is proved that in the case of a finite
lower semi-continuous cost function there exist integrable dual
maximizers $(\pphi,\ppsi)\in\Phi(\mu,\nu)$, provided that
\begin{align}\label{IntEins}
\mu\Big( \Big\{ x\in X:\int_Y c(x,y)\,d\nu(y)<\infty\Big\}\Big)>0,\\
\nu\Big( \Big\{ y\in Y:\int_X
c(x,y)\,d\mu(x)<\infty\Big\}\Big)>0.\label{IntZwei}
\end{align}
Their argument yields that functions
$\pphi,\ppsi$ which are dual maximizers in the sense of Theorem \ref{DualMaximizers} are  $\mu$- resp.\ $\nu$-integrable whenever
(\ref{IntEins}) and (\ref{IntZwei}) are satisfied.

\subsection{Dual maximizers and strong $c$-cyclical monotonicity}

\medskip

Theorem \ref{DualMaximizers} is also connected with the notion of
strong $c$-cyclical monotonicity introduced  in \cite{ScTe08}. A
transport plan $\pi$ is \emph{strongly $c$-cyclically
monotone}\footnote{This notion is called strong $c$-monotonicity in
\cite{ScTe08,BGMS08}. We find it, however, more consistent with
previous notations in the literature to call it strong $c$-cyclical
monotonicity.} if there exist Borel measurable functions $\phi:X\to
[-\infty,\infty), \psi:Y\to [-\infty,\infty)$ such that
$\phi(x)+\psi(y)\leq c(x,y)$ for all $(x,y)\in X\times Y$ and
$\phi(x)+\psi(y)=c(x,y)$ for $\pi$-almost all $(x,y)\in X\times Y$.
In \cite{BGMS08} it is proved that in the case of a finite cost
function, a transport plan is strongly $c$-cyclically monotone if
and only if it is optimal. We want to point out that this is also a
consequence of Theorem \ref{DualMaximizers}:

\begin{corollary}\label{ConnectionWithStrongCmon} \Assumptions\
\begin{enumerate}
\item Let $\pi $ be a finite transport plan and assume that $\phi:X\to [-\infty,\infty), \psi:Y\to [-\infty,\infty)$ witness that $\pi$ is strongly $c$-cyclically monotone. Then $J(\phi,\psi)=I_c[\pi]$, thus $\pi$ is an optimal transport plan and $\phi,\psi$ are dual maximizers.
\item Assume that $\ppi$ is an optimal transport plan, i.e.\ $I_c[\ppi]=I_c$. Then $\ppi$ is strongly $c$-cyclically monotone. In fact, this is witnessed by every  pair $(\pphi,\ppsi)$ of dual maximizers.
\end{enumerate}
\end{corollary}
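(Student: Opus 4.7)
The plan is to derive both parts directly from Theorem \ref{DualMaximizers} and Lemma \ref{IndependenceOfPi}; no new machinery should be needed, since Corollary \ref{ConnectionWithStrongCmon} is essentially a repackaging of these results in the language of strong $c$-cyclical monotonicity.

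For part (1), I would start by evaluating $J(\phi,\psi)$ via the extended definition \eqref{ExtendedDefinition} against the given $\pi$. The hypothesis that $\phi(x)+\psi(y)=c(x,y)$ for $\pi$-almost every $(x,y)$ gives immediately
\[
J(\phi,\psi)=\int_{X\times Y}\!\bigl[\phi(x)+\psi(y)\bigr]\,d\pi(x,y)=\int_{X\times Y}\!c\,d\pi=I_c[\pi].
\]
To compare this with $I_c$, invoke the second part of Lemma \ref{IndependenceOfPi}: there exist bounded $(\phi_n,\psi_n)\in\Phi(\mu,\nu)$ with $J(\phi_n,\psi_n)\to J(\phi,\psi)$, so $J(\phi,\psi)\le \sup\{J(\phi',\psi'):(\phi',\psi')\in\Phi(\mu,\nu)\}$. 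By Theorem \ref{BorelDuality} this supremum equals $I_c$, and trivially $I_c\le I_c[\pi]$. Chaining the inequalities
\[
I_c[\pi]=J(\phi,\psi)\le \sup J =I_c\le I_c[\pi]
\]
forces equality throughout, which is exactly the statement that $\pi$ is optimal and $(\phi,\psi)$ is a pair of dual maximizers in the sense of Theorem \ref{DualMaximizers}.

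For part (2), I would fix any optimal $\ppi$ and, using Theorem \ref{DualMaximizers}, any dual maximizers $(\pphi,\ppsi)$. Then
\[
I_c=J(\pphi,\ppsi)=\int_{X\times Y}\!\bigl[\pphi(x)+\ppsi(y)\bigr]\,d\ppi(x,y)\le\int_{X\times Y}\!c\,d\ppi=I_c[\ppi]=I_c,
\]
so equality holds at every step. Combined with the everywhere inequality $\pphi(x)+\ppsi(y)\le c(x,y)$, the vanishing of $\int[c-(\pphi+\ppsi)]\,d\ppi$ yields $\pphi(x)+\ppsi(y)=c(x,y)$ $\ppi$-a.s., which is precisely strong $c$-cyclical monotonicity witnessed by the pair $(\pphi,\ppsi)$.

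I do not anticipate a genuine obstacle. The one point requiring care is that in part (1) the marginal integrals $\int\phi\,d\mu$ and $\int\psi\,d\nu$ need not be individually defined; this is why the extended definition \eqref{ExtendedDefinition} and the approximation assertion in Lemma \ref{IndependenceOfPi} are essential for concluding $J(\phi,\psi)\le\sup J$ (the supremum being taken over bounded pairs, where duality \eqref{DualityEquation} applies).
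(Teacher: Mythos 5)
Your proposal is correct and follows essentially the same route as the paper: part (1) is the chain $I_c[\pi]=\int c\,d\pi=\int[\phi+\psi]\,d\pi=J(\phi,\psi)\le I_c$, and part (2) deduces $c=\pphi+\ppsi$ $\ppi$-a.e.\ from the vanishing of $\int\bigl[c-(\pphi+\ppsi)\bigr]\,d\ppi$. The only (harmless) cosmetic difference is that you justify $J(\phi,\psi)\le I_c$ by passing through the bounded approximants of Lemma \ref{IndependenceOfPi} and the duality statement, whereas the paper reads it off directly from the fact that the extended $J$ can be evaluated against any finite transport plan.
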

\begin{proof}
Given a transport plan $\pi$ and functions $\phi,\psi$ witnessing
that $\pi$ is strongly $c$-cyclically monotone, we have \begin{align} I_c[\pi]= \int_{X\times Y} c(x,y) \,d\pi(x,y)
=\int_{X\times Y}\big[ \phi(x)+\psi(y)\big] \,d\pi(x,y)=J(\phi,\psi)\leq I_c. \end{align} Thus
$I_c[\pi]=I_c$, hence $\pi$ is an optimal transport and $\phi,\psi $
are dual maximizers in the sense of Theorem \ref{DualMaximizers}.

\medskip

Conversely assume that $\ppi$ is an optimal transport plan and that
$\pphi,\ppsi$ are dual maximizers.
Then
\begin{align}
0=I_c[\ppi]-J(\pphi,\ppsi)=\int_{X\times Y}
c(x,y)-\big[\pphi(x)+\ppsi(y)\big]\,d\ppi(x,y).
\end{align}
Hence $c(x,y)- \big[\pphi(x)+\ppsi(y)\big]=0$ for $\ppi$-almost all $(x,y)\in
X\times Y$. Thus, $\pphi,\ppsi$ witness that $\ppi$ is strongly
$c$-cyclically monotone.
\end{proof}

\subsection{Continuity of $I_c$.}

The fact that Monge-Kantorovich duality holds for not necessarily finite lower semi-continuous cost functions is related to a certain continuity property of the mapping  $c\mapsto I_c$ which is always satisfied in the lower semi-continuous setting.

If the cost function  $c:X\times Y\to [0,\infty]$ is lower
semi-continuous, there exists a sequence $(c_n)_{n\geq 1}$ of
bounded continuous functions such that $c_n\uparrow c$.  For each
such sequence we have $I_{c_n}\uparrow I_c$. This can easily be
derived from the fact that $\Pi(\mu,\nu)$ is weakly
compact\footnote{This is a consequence of Prokhorov's Theorem, see
\cite[p 56]{Vill05}.}: Pick for each $n\geq 1$ a transport plan
$\pi_n$ such that $I_{c_n}[\pi_n]\leq I_{c_n}+1/n$. By passing to a
subsequence if necessary we may assume that $(\pi_n)_{n\geq 1}$
converges weakly to some transport plan $\pi\in\Pi(\mu,\nu)$. Then
\begin{align}
I_c\leq  I_c[\pi] =\lim_{m\to \infty}\int c_m \, d\pi&=\lim_{m\to \infty}\left(\lim_{n\to \infty}\int c_m \, d\pi_n\right)\\
&\leq \lim_{m\to \infty} \left(\lim_{n\to \infty}\int c_n \, d\pi_n\right)=\lim_{n\to\infty}I_{c_n}.
\end{align}
Since $I_{c_n}\leq I_c$ it follows in fact that $I_{c_n}\uparrow I_c$.  Observe  that $I_c[\pi]=I_c$, i.e.\ $\pi$ is a primal optimizer.

A direct consequence of this simple continuity result is that as soon as we have shown the relatively easy result that duality holds for bounded continuous functions, it already follows for an arbitrary lower semi-continuous function $c:X\times Y\to [0,\infty]$. To see this, pick a sequence of bounded continuous functions  $c_n:X\times Y\to [0,\infty)$ and for each $n\geq 1$ a pair of integrable  functions $(\phi_n,\psi_n) $  such that
 $J(\phi_n,\psi_n)\geq I_{c_n}-1/n$ and $\phi_n(x)+\psi_n(y) \leq c_n(x,y)\ \big(\leq c(x,y)\big)$ on $X\times Y$. Then
\begin{align}
\sup_{n\geq 1} \{J(\phi_n,\psi_n\}\geq \lim_{n\to \infty} (I_{c_n}-1/n)  =I_c,
\end{align}
thus duality holds.

A similar continuity property holds in the case of a finite measurable cost function. However we do not know how continuity in this sense  can be shown directly, instead we achieve it as a consequence of Theorem \ref{BorelDuality}.
\begin{corollary}\label{IcContinuous}
\Assumptions\ Then $I_{c\wedge n}\uparrow I_c$.
\end{corollary}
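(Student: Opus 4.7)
The plan is to leverage the duality established in Theorem \ref{BorelDuality} on both sides: apply it to $c$ to produce near-optimal dual pairs, and apply it to the bounded costs $c\wedge n$ to lift truncated versions of those pairs back to lower bounds on $I_{c\wedge n}$. The monotonicity half is immediate: since $c\wedge n \leq c\wedge(n+1) \leq c$, the sequence $(I_{c\wedge n})$ is non-decreasing and bounded above by $I_c$, which is itself finite by the hypothesis of a finite transport plan. Write $L := \lim_n I_{c\wedge n} \leq I_c$; the entire task is the reverse inequality $L \geq I_c$.

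To produce dual pairs adapted to $c\wedge n$, I would fix $\eps>0$ and use Theorem \ref{BorelDuality} applied to $c$ to obtain an integrable pair $(\phi,\psi)\in\Phi(\mu,\nu)$ with $J(\phi,\psi) > I_c - \eps$. The key construction is the symmetric truncation $\phi_n := \phi\wedge (n/2)$ and $\psi_n := \psi \wedge (n/2)$: these satisfy simultaneously $\phi_n(x)+\psi_n(y) \leq \phi(x)+\psi(y) \leq c(x,y)$ and $\phi_n(x)+\psi_n(y) \leq n$, hence
\[
\phi_n(x)+\psi_n(y) \leq c(x,y)\wedge n \quad\text{for every } (x,y)\in X\times Y.
\]
Since $|\phi_n|\leq|\phi|$ and $|\psi_n|\leq|\psi|$, the truncated functions remain integrable, so $(\phi_n,\psi_n)$ is dual-feasible for the bounded Borel cost $c\wedge n$. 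A second application of Theorem \ref{BorelDuality}, now to $c\wedge n$ (where every transport plan is trivially finite since $c\wedge n\leq n$), yields $I_{c\wedge n}\geq J(\phi_n,\psi_n)$.

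To conclude, dominated convergence with dominants $|\phi|$ and $|\psi|$ gives $J(\phi_n,\psi_n)\to J(\phi,\psi)$, so $L \geq J(\phi,\psi) > I_c - \eps$; letting $\eps\to 0$ finishes the argument. The only subtlety is the splitting of the truncation budget: the naive choice of truncating each of $\phi,\psi$ by $n$ would force only $\phi_n+\psi_n\leq 2n$, which is not enough to sit below $c\wedge n$. Splitting the budget symmetrically between the two potentials is the single point that requires any thought; the rest is a direct application of Theorem \ref{BorelDuality} to the two cost functions $c$ and $c\wedge n$.
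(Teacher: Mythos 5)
Your proof is correct, and its core idea coincides with the paper's: produce a near-optimal dual pair for $c$ and observe that it becomes feasible for $c\wedge n$ once $n$ is large, then invoke weak duality. The only difference is how boundedness of the pair is arranged. The paper appeals to the ``moreover'' clause of Lemma \ref{IndependenceOfPi}, which supplies \emph{bounded} functions $\phi,\psi$ with $J(\phi,\psi)>I_c-\eps$; for such a pair the inequality $\phi(x)+\psi(y)\leq (c\wedge n)(x,y)$ holds outright for all $n\geq \sup\phi+\sup\psi$, so no limiting argument is needed. You instead take an integrable (possibly unbounded) pair directly from Theorem \ref{BorelDuality} and truncate each potential at $n/2$, which costs you a dominated-convergence step but avoids Lemma \ref{IndependenceOfPi}; your point that the truncation budget must be split between the two potentials is exactly the right observation, and since $n/2>0$ the domination $|\phi\wedge(n/2)|\leq|\phi|$ does hold. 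One small simplification: the inequality $I_{c\wedge n}\geq J(\phi_n,\psi_n)$ does not require a second application of Theorem \ref{BorelDuality} — it is just weak duality, i.e.\ $\int (c\wedge n)\,d\tilde\pi\geq\int\big[\phi_n(x)+\psi_n(y)\big]\,d\tilde\pi=J(\phi_n,\psi_n)$ for every $\tilde\pi\in\Pi(\mu,\nu)$.
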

\begin{proof}
Given $\eps>0$ there exist bounded functions $\phi:X\to \R, \psi:Y\to \R$ such that $J(\phi,\psi)>I_c-\eps$  by Theorem \ref{BorelDuality} and Lemma \ref{IndependenceOfPi}. For all large enough $n$, we have $\phi(x)+\psi(y)\leq (c\wedge n)(x,y)$ on $X\times Y$. Thus $I_{c_n}\geq J(\phi,\psi)\geq I_c-\eps.$
\end{proof}

\subsection{Cost functions with negative values}
For notational convenience we have chosen to consider only
non-negative cost functions, but this restriction is somewhat
stronger than necessary.  Theorem \ref{BorelDuality} does remain
valid (and  in  fact so do our other results) in the setting of a
cost function $c:X\times Y\to [-\infty,\infty]$, provided that $c$
is $\mu\otimes \nu$-a.e.\ finite and that there exist integrable
functions $a:X\to [-\infty,\infty), b:Y\to [-\infty,\infty)$  such
that
\begin{align}
a(x)+b(y)\leq c(x,y)
\end{align}
for all $(x,y)\in X\times Y$. This is an immediate consequence of
Theorem \ref{BorelDuality} applied  to the cost function
$c(x,y)-a(x)-b(y)\footnote{Throughout this paper we use the
convention $\infty-\infty=\infty$.}$.

\section{$c$-cyclical monotonicity}
A transport plan  $\pi$  is \emph{$c$-cyclically monotone} if it is
concentrated on a Borel set $\Gamma\subset X\times Y$ which is $c$-cyclically monotone in the sense  that
\begin{equation}\label{QuantifiedMonotonicityEquation}
 \sum_{i=1}^n c(x_{i+1},y_i)-c(x_{i},y_i)\geq 0.
 \end{equation} for  all $(x_1, y_1),\ldots,(x_n,y_n)\in \Gamma$. (Here we let
 $x_{n+1}=x_1$.) Heuristically, $\pi$ is $c$-cyclically monotone if it cannot
 be enhanced by means of cyclical rerouting. Hence it is intuitively appealing (and obvious in the finite setting) that optimal transport plans are always $c$-cyclically monotone. In fact it can be shown that in the case of a Borel measurable cost function $c:X\times Y\to [0,\infty]$ every optimal transport plan is $c$-cyclically monotone,
 and that the two notions  are equivalent if $c$ is finitely valued (\cite[Theorem 1]{BGMS08}).
 (This equivalence is not true in general, as shown by a beautiful counterexample due
 to Ambrosio and Pratelli, \cite[Example 3.5]{AmPr03}. The connection between optimality and $c$-cyclical monotonicity was also studied in \cite{GaMc96,AmPr03,Prat08,ScTe08}.)

The concept of $c$-cyclical monotonicity is crucial for the
Monge-Kantorovich duality. We shall review its connection with
strong $c$-cyclical monotonicity. As indicated by the name, it is
almost obvious to see that strong $c$-cyclical monotonicity implies
$c$-cyclical monotonicity: Assume that $\phi, \psi$ witness that
$\pi$ is strongly $c$-cyclically monotone. Then $\pi$ is
concentrated on the set $\Gamma=\{(x,y):c(x,y)=\phi(x)+\psi(y)\}$
and
\begin{align}\label{StrongCmonToCmon}
& \sum_{i=1}^n c(x_{i+1},y_i)-c(x_{i},y_i)=\\
& \sum_{i=1}^n c(x_{i+1},y_i)-\big[\phi(x_{i})+\psi(y_i)\big]\geq\\
& \sum_{i=1}^n \big[\phi(x_{i+1})+\psi(y_i)\big]-\big[\phi(x_{i})+\psi(y_i)\big]=0
\end{align}
for  all $(x_1, y_1),\ldots,(x_n,y_n)\in \Gamma$. Less trivially, the subsequent proposition asserts that the strong version can be deduced from the usual one,  provided that $c$ is finitely valued. (This is in general not the case if $c$ attains $\infty$ on a large set, see Example \ref{ExAmPra} below.)

\begin{proposition}\label{CmonToStrongCmon} \Assumptions\ Then every $c$-cyclically monotone transport plan $\pi$ is strongly $c$-cyclically monotone.
\end{proposition}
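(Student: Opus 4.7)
The plan is to combine Theorem \ref{DualMaximizers} with the equivalence between $c$-cyclical monotonicity and optimality in the finite-valued setting, \cite[Theorem 1]{BGMS08}. I start by invoking Theorem \ref{DualMaximizers} to obtain Borel dual maximizers $\hat\phi : X \to [-\infty,\infty)$ and $\hat\psi : Y \to [-\infty,\infty)$ satisfying $\hat\phi(x) + \hat\psi(y) \leq c(x,y)$ for all $(x,y) \in X \times Y$ and $J(\hat\phi, \hat\psi) = I_c$. I claim that this pair already witnesses the strong $c$-cyclical monotonicity of the given $c$-cyclically monotone plan $\pi$. Indeed, once $\pi$ has been identified as an optimal transport plan, the computation from the proof of Corollary \ref{ConnectionWithStrongCmon}(2) gives
\begin{align*}
 0 = I_c[\pi] - J(\hat\phi, \hat\psi) = \int_{X \times Y} \bigl[c(x,y) - \hat\phi(x) - \hat\psi(y)\bigr]\,d\pi(x,y),
\end{align*}
and the pointwise nonnegativity of the integrand forces $c(x,y) = \hat\phi(x) + \hat\psi(y)$ for $\pi$-almost every $(x,y)$.

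The substantive step is therefore to show that $\pi$ is $c$-optimal, which I plan to do by reducing to the finite-valued setting. The transport plan $\pi$ is concentrated on a Borel set $\Gamma$ on which (\ref{QuantifiedMonotonicityEquation}) holds, and without loss of generality $\Gamma \subseteq \{c < \infty\}$; in particular $\pi$ assigns no mass to the $\mu\otimes\nu$-null set $N := \{c = \infty\}$. Replacing $c$ by the finite-valued Borel cost function $\tilde c := c\cdot \Eins_{X \times Y \setminus N}$, one has $\tilde c \leq c$ pointwise, with equality both $\mu\otimes\nu$-a.e.\ and $\pi$-a.e.; moreover $\Gamma$ is also $\tilde c$-cyclically monotone, so $\pi$ is $\tilde c$-cyclically monotone. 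Applying \cite[Theorem 1]{BGMS08} to the finite cost $\tilde c$, I conclude that $\pi$ is $\tilde c$-optimal, and for any competing plan $\pi' \in \Pi(\mu,\nu)$ the chain
\begin{align*}
 I_c[\pi] = I_{\tilde c}[\pi] \leq I_{\tilde c}[\pi'] \leq I_c[\pi']
\end{align*}
then shows $\pi$ is also $c$-optimal.

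The step I expect to be the main obstacle is precisely this reduction to a finite-valued cost. The delicate point is that transport plans can concentrate on $\mu\otimes\nu$-null sets, so altering $c$ on such a set can in principle change $I_c[\pi']$ for some plans $\pi'$. Here the reduction is nonetheless legitimate because a $c$-cyclically monotone $\pi$ is forced to avoid $\{c = \infty\}$, and because the inequality $\tilde c \leq c$ is global, which is exactly what allows the one-sided comparison above to close. Once this observation has been absorbed, the remainder is a clean synthesis of the duality developed above with the finite-cost result of \cite{BGMS08}.
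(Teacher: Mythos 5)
There is a genuine gap, and it sits exactly at the step you flagged as the main obstacle: the claim that ``$\Gamma$ is also $\tilde c$-cyclically monotone'' is false in general. The inequality (\ref{QuantifiedMonotonicityEquation}) involves not only the values $c(x_i,y_i)$ at points of $\Gamma$ (where indeed $\tilde c=c$ after your reduction) but also the values $c(x_{i+1},y_i)$ at the \emph{rerouted} pairs, which need not lie in $\Gamma$ and may lie in $N=\{c=\infty\}$. Replacing $\infty$ by $0$ there turns a term carrying a \emph{plus} sign from $+\infty$ into $0$ and can destroy the inequality. Concretely: on $X=Y=[0,1]$ with Lebesgue measure let $c=2$ on the diagonal $\Gamma_0$, $c=\infty$ on the antidiagonal $\{(x,1-x)\}$ off the diagonal, and $c=3$ elsewhere. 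Then $c$ is $\mu\otimes\nu$-a.e.\ finite, $\Gamma_0$ is $c$-cyclically monotone (every term $c(x_{i+1},y_i)\ge 2$), and the diagonal plan is even $c$-optimal; but for $\tilde c=c\cdot\Eins_{\{c<\infty\}}$ the cycle $(x,x),(1-x,1-x)$ gives $\tilde c(1-x,x)+\tilde c(x,1-x)-\tilde c(x,x)-\tilde c(1-x,1-x)=0+0-2-2<0$, and correspondingly the diagonal plan is not $\tilde c$-optimal (the antidiagonal plan has $\tilde c$-cost $0$). So the middle inequality $I_{\tilde c}[\pi]\le I_{\tilde c}[\pi']$ in your chain is unavailable. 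The global bound $\tilde c\le c$ only delivers the harmless half of the comparison; it does nothing to protect the $\tilde c$-optimality of $\pi$ against competitors concentrating on the null set where the cost was lowered.

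This is not a patchable technicality: passing from ``a.e.\ finite'' to ``everywhere finite'' is precisely the nontrivial content of the proposition (compare Example \ref{ExAmPra}, where $c=\infty$ on a large set and the conclusion fails outright), and no truncation of $c$ on $\{c=\infty\}$ by a single finite value can work since $c$ is unbounded. The paper's route is structurally different and never goes through optimality of $\pi$: it applies Proposition \ref{SandwichTheorem} with $\ovlc=c$ and $\unlc=c$ on $\Gamma$, $\unlc=-\infty$ off $\Gamma$; hypothesis (\ref{SandwichMonotonicity}) then reduces exactly to the $c$-cyclical monotonicity of $\Gamma$ (any cycle leaving $\Gamma$ contributes $+\infty$), and the sandwich functions $\phi,\psi$ directly witness strong $c$-cyclical monotonicity. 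The null-set bookkeeping in Lemmas \ref{InfIsMeasurable}--\ref{Connectedness} is exactly what replaces your truncation. Two smaller remarks: quoting \cite[Theorem 1]{BGMS08} for ``$c$-cyclically monotone $\Rightarrow$ optimal'' is close to assuming the conclusion, since that implication is itself obtained there by first producing witnesses of strong $c$-cyclical monotonicity; and your ``without loss of generality $\Gamma\subseteq\{c<\infty\}$'' requires $\pi$ to be a finite transport plan, which should be said explicitly. The first half of your argument (optimal $\Rightarrow$ strongly $c$-cyclically monotone via Theorem \ref{DualMaximizers}) is correct and is exactly Corollary \ref{ConnectionWithStrongCmon}(2).
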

  Proposition \ref{CmonToStrongCmon}  can be proved using a well known
  construction given  in  \cite{R96}, see also \cite{Rock66}, \cite[Chapter 2]{Vill03} and \cite[Theorem 3.2]{AmPr03}. Assume for notational convenience that  
 $\pi$ is concentrated on a $c$-cyclically monotone set $\Gamma$ which satisfies $p_X[\Gamma]=X$ and $p_Y[\Gamma]=Y$ and that $c$ is finite on $X\times Y$. Then the definition  \begin{align}\label{PhiDefi}
\phi(x):=&\inf \!\left\{\sum_{i=1}^n\big[ c(x_{i+1},y_i)\!-\!c(x_i,y_i)\big]: (x_1,y_1),\ldots,(x_n,y_n)\in \Gamma, x_{n+1}= x\right\}\!\!\\
\psi(y):=& \inf_{x\in X} c(x,y)-\phi(x),
\end{align} where $x_1\in X$ is an arbitrary fixed point, yields functions witnessing that $\pi$ is strongly $c$-cyclically monotone.
Strictly speaking, it  might be necessary to alter $\phi$ and $\psi$ on appropriately chosen null sets to ensure that they are Borel measurable functions, but these are merely technical obstacles which we will not discuss at this point.
Instead we shall below  derive  Proposition  \ref{CmonToStrongCmon} rigorously from the more general result in Proposition \ref{SandwichTheorem}.
 
 \medskip
The subsequent statement summarizes how $c$-cyclical monotonicity connects to the other concepts discussed so far.
\begin{proposition}
\Assumptions\ Let $\pphi, \ppsi$ be dual maximizers in the sense of (\ref{ExtendedDefinition}). Then the set $\Gamma:=\{(x,y):\pphi(x)+\ppsi(y)=c(x,y)\}$ is $c$-cyclically monotone.

For any finite transport plan $\pi$ the following conditions are equivalent.
\begin{enumerate}
\item[(a)] $\pi$ is concentrated on $\Gamma$.
\item[(b)] $\pi$ is $c$-cyclically monotone.
\item[(c)] $\pi$ is strongly $c$-cyclically monotone.
\item[(d)] $\pi$ is optimal.
\end{enumerate}
\end{proposition}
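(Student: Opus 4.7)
The plan is to handle the $c$-cyclical monotonicity of $\Gamma$ by a direct calculation and then assemble the four-way equivalence by chaining results already established in the excerpt.

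First, to show $\Gamma=\{(x,y):\pphi(x)+\ppsi(y)=c(x,y)\}$ is $c$-cyclically monotone, I would take any $(x_1,y_1),\ldots,(x_n,y_n)\in\Gamma$ with $x_{n+1}=x_1$, substitute $c(x_i,y_i)=\pphi(x_i)+\ppsi(y_i)$ in the sum $\sum_{i=1}^n c(x_{i+1},y_i)-c(x_i,y_i)$, and estimate $c(x_{i+1},y_i)\geq \pphi(x_{i+1})+\ppsi(y_i)$. The resulting telescoping cancellation yields the required non-negativity, exactly as in the display \eqref{StrongCmonToCmon}.

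For the equivalence of (a)--(d), I would close the loop (a)$\Rightarrow$(c)$\Rightarrow$(d)$\Rightarrow$(a) and separately handle (b)$\Leftrightarrow$(c). The implication (a)$\Rightarrow$(c) is immediate: if $\pi$ is concentrated on $\Gamma$, then the pair $\pphi,\ppsi$ itself witnesses the strong $c$-cyclical monotonicity of $\pi$. The implication (c)$\Rightarrow$(d) is part~(1) of Corollary \ref{ConnectionWithStrongCmon}. For (d)$\Rightarrow$(a) I would invoke part~(2) of the same corollary: since the particular pair $\pphi,\ppsi$ of dual maximizers witnesses that every optimal $\pi$ is strongly $c$-cyclically monotone, one obtains $\pphi(x)+\ppsi(y)=c(x,y)$ $\pi$-almost surely, i.e.\ $\pi$ is concentrated on $\Gamma$. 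Finally, (c)$\Rightarrow$(b) is the elementary telescoping observation recorded in \eqref{StrongCmonToCmon}, and (b)$\Rightarrow$(c) is precisely Proposition \ref{CmonToStrongCmon} (which is available since $c$ is $\mu\otimes\nu$-a.e.\ finite by standing assumption).

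Since every ingredient is already in place, I expect no genuine obstacle; the statement is largely a bookkeeping consolidation of the preceding theory. The one point that deserves care is the step (d)$\Rightarrow$(a): it relies on the sharper form of Corollary \ref{ConnectionWithStrongCmon}(2), which asserts that \emph{every} pair of dual maximizers witnesses strong $c$-cyclical monotonicity of every optimal plan. This is what lets us conclude that concentration on $\Gamma$ holds with respect to the specific fixed pair $\pphi,\ppsi$ defining $\Gamma$, rather than merely with respect to some unspecified witness pair.
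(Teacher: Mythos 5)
Your proposal is correct and uses the same ingredients as the paper's proof: the telescoping estimate of \eqref{StrongCmonToCmon} for the $c$-cyclical monotonicity of $\Gamma$, Corollary \ref{ConnectionWithStrongCmon} for (c)$\Leftrightarrow$(d) and for (d)$\Rightarrow$(a), and Proposition \ref{CmonToStrongCmon} for (b)$\Rightarrow$(c). The only difference is cosmetic: the paper closes the cycle via the trivial implication (a)$\Rightarrow$(b) rather than your (a)$\Rightarrow$(c), but both are immediate and the argument is otherwise identical.
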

\begin{proof}
To see that $\Gamma$ is $c$-cyclically monotone, argue as in
(\ref{StrongCmonToCmon}). As regards the equivalence of (a) to (d),
(a) trivially implies (b). We have seen above that (b) and (c) are
equivalent and (c) and (d) are equivalent by Corollary
\ref{ConnectionWithStrongCmon}.

If $\pi$ is an optimal transport plan, $\pphi,\ppsi$ witness that $\pi$ is strongly $c$-cyclically monotone by Corollary \ref{ConnectionWithStrongCmon} (2) such that $\pphi(x)+\ppsi(y)=c(x,y)$ holds  for $\pi$-almost all $(x,y)\in X\times Y$.  Hence (d) implies (a).
\end{proof}

\section{Transports with subsidies}

In the last section it was described how dual maximizers can be constructed starting from a $c$-cyclically monotone transport plan.
However, in the absence of lower semi-continuity of $c$, there is no reason why one should find a transport plan which is supported by a $c$-cyclically monotone set, even in very regular situations as is shown by the subsequent easy example.

\begin{example} Let $(X,\mu)=(Y,\nu)$ equal the unit interval $[0,1]$ equipped with Lebesgue measure.
Define the cost of moving $x$ to $y$ by $c(x,y)=(x-y)^2$ for $x\neq
  y$ and let $c(x,x)=1$. Clearly it is possible find transport plans
  with arbitrarily small costs, but the infimum $0$ is not
  attained. Thus there exists no
  optimal and hence no $c$-cyclically monotone transport plan.

  Observe that  dual maximizers exist; just  set $\pphi\equiv\ppsi \equiv0$.
\end{example}

Our attempt to overcome this difficulty is to introduce a certain
\emph{subsidy function}. To explain this notion we take up the
anecdotal interpretation (see \cite[Chapter 3]{Vill05}) where
$(X,\mu)$ models the Parisien bakeries (i.e., croissant sellers) and
$(Y, \nu)$ the Parisian caf\'es (i.e., croissant buyers) and
$c(x,y)$ denotes the transport cost from bakery $x$ to cafe $y$. To
avoid technicalities we suppose that there are only finitely many
caf\'es and bakeries in Paris and that $c(x,y)$ is finitely valued.
(We are not sure to which degree this assumption corresponds to
reality.) Suppose that $\pi$ is the traditional way how the
croissants are transported from the bakeries to the caf\'es and
that, for whatever reason, the Parisian authorities want to maintain
this transport way also in the future. If the difference $\alpha$
between the present costs $I_c[\pi]$  and the cheapest possible
transport costs $I_c$ is strictly positive and the authorities do
not intervene, they should expect that market forces will sooner or
later cause the transport scheme  to switch from $\pi$ to some other
$\tilde \pi$ with lower total transport cost. Therefore they might
try to introduce a subsidy system, where the transport of each
croissant from $x$ to $y$ is subsidized by an amount  $f(x,y)\in
[0,\infty)$.

The aim of the Parisian authorities is to design the system $f(x,y)$ of subsidies in such a way that the daily total subsidies $F=\int_{X\times Y} f\,d\pi$ effectively paid are minimized under the constraint that the bakers and cafetiers have no rational incentive to change the traditional transport system $\pi$ by cyclically rerouting their ways of transportation.

A moment's reflection reveals that  a lower bound for the cost of
subsidy is given by
\begin{align}\label{LBS}\tag{LB}F=\int_{X\times Y} f\,d\pi\geq\alpha=I_c[\pi]-I_c,\end{align} and it will turn out  that this lower
bound is attained which should not be very surprising. In fact,
there are (at least) two versions of the ``no incentive to change''
constraint:
\begin{align}\label{S1}\tag{W1}
\sum_{i=1}^n c(x_{i+1},y_i)-\big(c(x_i,y_i)-f(x_i,y_i)\big) \geq 0
\end{align}
for all $(x_1,y_1),\ldots,(x_n,y_n)$ in the support of $\pi$ or
\begin{align}\label{S2}\tag{S1}
\sum_{i=1}^n \big(c(x_{i+1},y_i)-f(x_{i+1},y_i)\big)-\big(c(x_i,y_i)-f(x_i,y_i)\big) \geq 0
\end{align}
for all $(x_1,y_1),\ldots,(x_n,y_n)$ in the support of $\pi$.

An interpretation of the two  requirements goes as follows. In
(\ref{S1}) the authorities make a ``take it or leave it'' proposal
to the bakers and cafetiers: if you stick to the transport system
$\pi$ we pay the subsidies $f$, if not we pay nothing. Hence
comparing the transport costs from $x_i$ to $y_i$ with the ones from
$x_{i+1}$ to $y_{i}$, we have to make sure that for every collection
$(x_1,y_1),\ldots, (x_n,y_n)$ in the support of $\pi$  the
subsidized  costs $\sum_{i=1}^n c(x_i,y_i)-f(x_i,y_i)$ are less than
or equal to the non-subsidized costs after rerouting $\sum_{i=1}^n
c(x_{i+1},y_i)$, which amounts to (\ref{S1}).

In the interpretation of (\ref{S2}) the Parisian authorities behave
in a less authoritarian way: they promise to pay the subsidies
$f(x,y)$ independently of whether the bakers and cafetiers are
obedient or not, which amounts to the constraint (\ref{S2}).

In fact, and this seems somewhat surprising, the validity of (\ref{S1}) implies that there exists a function $f$ such that $F=\int_{X\times Y} f\, d\pi=\alpha$ and such that the subsequent constraint (\ref{S3}) which is yet stronger than (\ref{S2}) is satisfied:
\begin{align}\label{S3}\tag{S2}
\sum_{i=1}^n \big(c(x_{i+1},y_i)- f(x_{i+1},y_i)\big)-\big(c(x_i,y_i)-f(x_i,y_i)\big) \geq 0
\end{align}
\emph{for all} $(x_1,y_1),\ldots,(x_n,y_n)\in X\times Y$ (not necessarily being in the support of $\pi$).

Replacing $(x_1,y_1),\ldots,(x_n,y_n)$ by $(x_n,y_{n-1}),(x_{n-1},
y_{n-2}),\ldots,(x_2,y_1), (x_1,y_n)$ one verifies that one must
have \emph{equality} in (\ref{S3}) for all
$(x_1,y_1),\ldots,(x_n,y_n)\in X\times Y$. This may be interpreted
as follows: there is a subsidy function $f(x,y)$ with total subsidy
payment $F=\int_{X\times Y} f\, d\pi =\alpha$ and such that, for the
subsidized transport cost we have that $\int_{X\times Y}(c-f)\,
d\tilde\pi$ is equal for \emph{any} transport plan $\tilde \pi\in
\Pi(\mu,\nu)$ and if (\ref{LBS}) holds true, one easily verifies
that this value must equal $I_c$. In particular, the bakers and
cafetiers have no incentive to change $\pi$ as they are, in fact,
indifferent between all the possible transports $\tilde \pi\in
\Pi(\mu,\nu)$, if their goal is to minimize the total subsidized
transport costs.

In order to prove the existence of a subsidy system  $f(x,y)$
satisfying (\ref{S3}) and (\ref{LBS}) we use the following constraint
which strengthens (\ref{S1}) in a similar way as (\ref{S3})
strengthens (\ref{S2}):
\begin{align}\label{S5}\tag{W2}
\sum_{i=1}^n c(x_{i+1},y_i)-\big(c(x_i,y_i)-f(x_i,y_i)\big) \geq 0
\end{align}
for all $(x_1,y_1),\ldots,(x_n,y_n)\in X\times Y$.

We  refrain from giving an intuitive interpretation of (\ref{S5}).
 Rather we try to indicate on
an intuitive level why all four versions of the constraint are
equivalent when minimizing $F=\int_{X\times Y} f\, d\pi$.  More
precisely, suppose that there is a function $f(x,y)$ such that
$\int_{X\times Y} f \, d\pi=\alpha$  satisfying (the weakest form
of) constraint (\ref{S1}) and let us show that there exists a
function $\tilde f$ with $\int \tilde f \,d\tilde \pi=I_c[\tilde
\pi]-I_c$ for any transport plan $\tilde \pi$. Hence $\tilde f$
satisfies the (strongest form of) constraint (\ref{S3}).

To pass from (\ref{S1}) to (\ref{S5}) is a cheap shot:
  observe that (\ref{S1}) as well as $\int _{X\times Y} f\, d\pi$ only
  pertain to values of $f$ on the support of $\pi$.
  Hence we may alter $f$ outside the support of $\pi$ to be
  $\infty$ (in the case of finite $X$ and $Y$  we clearly may replace
  $\infty$ by a sufficiently large real number), which then trivially
  satisfies (\ref{S5}). To alleviate notation we still denote by $f$ the
   function satisfying (\ref{S5}). To pass from (\ref{S5}) to (\ref{S3})
   we observe the subsequent ``sandwich'' type result which seems
   interesting in its own right.

Denoting $\ovlc(x,y)=c(x,y)$ and
$\unlc(x,y):= c(x,y)-f(x,y)$ inequality (\ref{S5}) may be written as
\begin{align}\label{S7}\tag{W3}
\sum_{i=1}^n \ovlc(x_{i+1},y_i)-\unlc(x_i,y_i) \geq 0
\end{align}
for all $(x_1,y_1),\ldots,(x_n,y_n)\in X\times Y$.

We shall show that (\ref{S7}) implies (at least in our present setting of finite spaces $X$ and $Y$  and a finite cost function $c=\ovlc$) that we may find functions $\phi(x),\psi(y)$ such that
\begin{align}\label{S7a}\tag{W3a}
\ovlc(x,y)\geq \phi(x)+\psi(y)\geq \unlc(x,y),
\end{align}
for all $(x,y)\in X$.

\medskip

 To motivate why this result should indeed be considered as  a sandwhich theorem note the easy fact that a function $c(x,y)$ on $X\times Y $ may be written as
\begin{align}\label{S8a}
c(x,y)=\phi(x)+\psi(y)
\end{align}
if and only if it satisfies
\begin{align}\label{S8}
\sum_{i=1}^n c(x_{i+1},y_i)-c(x_i,y_i) = 0
\end{align}
for all $(x_1,y_1),\ldots,(x_n,y_n)\in X\times Y$. The problem under
which conditions a function defined on the product of two sets can
be decomposed as the sum of two univariate functions is studied in
detail in \cite{BoLe92}.
\medskip

A precise version of our sandwich theorem under the assumption that $\ovlc,\unlc$ are Borel measurable and $\ovlc$ is $\mu\otimes \nu$-a.e.\ finite  is given in Proposition \ref{SandwichTheorem}.
In our present situation it guarantees the existence of functions  $\phi, \psi$ satisfying
\begin{align}\label{W3b}\tag{W3b}
c(x,y)\geq \phi(x)+\psi(y)\geq c(x,y)-f(x,y).
\end{align} for all $(x,y)\in X\times Y$. This allows us to define
\begin{align}
\tilde f(x,y):= c(x,y)-\big(\phi(x)+\psi(y)\big).
\end{align}
The lower bound of (\ref{W3b}) implies that $f(x,y)\geq c(x,y)-\big( \phi(x)+\psi(y)\big)=\tilde f(x,y)$, in particular the desired bound for the total cost of subsidy $\int_{X\times Y} \tilde f\, d\pi \leq  \int_{X\times Y} f\, d\pi=\alpha$ holds true.
The  subsidized cost function $(c-\tilde f)$  is of the form
\begin{align}
  (c-\tilde f)(x,y) = \phi(x)+\psi (y)
\end{align}
and hence satisfies  the (strongest form of) constraint (\ref{S3}).

We have observed above that (\ref{S3}) implies that  $I_c =\int_{X\times Y} (c-\tilde f) \,d\tilde\pi$ for any finite transport plan $\tilde \pi\in \Pi(\mu,\nu)$. Thus $\int_{X\times Y} (c-\tilde f) \,d\tilde\pi=\int_{X\times Y} \phi(x)+\psi(y) \,d\tilde\pi(x,y)=J(\phi,\psi)$ yields that  $\phi, \psi$ are dual optimizers.

Finally, note that as a consequence of the fact that the total costs $\int_{X\times Y} \tilde f\, d\pi$ of our subsidy  system cannot be less than $\alpha$ together with the point-wise inequality $f\geq \tilde f$ implies that $f$ and $\tilde f$ coincide on the support of $\pi$.


\subsection{Existence of subsidy functions}

After the previous heuristic arguments we now pass to a more
rigorous analysis. In order to find dual maximizers, we will first
prove that there exists a subsidy function $f$ which satisfies
(\ref{S3}) for a given finite transport plan $\pi$. Note that in the
subsequent Proposition we do not assume
 that $c$ is $\mu\otimes\nu$-a.e.\ finitely valued.
\begin{proposition}\label{MonotonicityViaOptimality} Let $X,Y$ be Polish spaces equipped with Borel probability measures $\mu,\nu$.
Let  $c:X\times Y\to [0,\infty]$ be Borel measurable, assume that
$\pi$ is a finite transport plan
 and set $\alpha=I_c[\pi]-I_c\geq 0$. Then there exists a function
  $f:X\times Y\to [0,\infty]$   such that $\int f\, d\pi =\alpha$
  and,
  for  all $(x_1, y_1),\ldots,(x_n,y_n)\in X\times Y$, \begin{equation}\sum_{i=1}^n
  c(x_{i+1},y_i)+f(x_i,y_i)-c(x_{i},y_i)\geq 0.\footnote{We prefer to write
  $+ f(x_i,y_i)-c(x_i,y_i)$ rather then $-(c(x_i,y_i)-f(x_i,y_i))$ in view
  of our convention $\infty-\infty=\infty$.}
  \end{equation}
\end{proposition}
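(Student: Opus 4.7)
My plan is to split the task into a \emph{reduction} that pushes the combinatorial complexity off the support of $\pi$, followed by a \emph{minimax-type construction} on the support itself, in which a minimizing sequence of competing transport plans supplies the required integral bound.

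First, a reduction. By inner regularity of $\pi$ (Polish spaces, finite transport cost), I would choose a $\sigma$-compact Borel set $\Gamma\subseteq X\times Y$ with $\pi(\Gamma)=1$ on which $c$ is finite. Setting $f\equiv +\infty$ on $(X\times Y)\setminus\Gamma$ makes the cyclic inequality trivially true for every sequence $(x_1,y_1),\dots,(x_n,y_n)$ containing at least one point off $\Gamma$, since one of the summands $f(x_i,y_i)$ is then $+\infty$ (and with the convention $\infty-\infty=\infty$ we are safe even if some $c(x_i,y_i)=\infty$). So the task reduces to producing a Borel measurable $f:\Gamma\to[0,\infty]$ with $\int_\Gamma f\,d\pi=\alpha$ and $\sum_{i=1}^n c(x_{i+1},y_i)+f(x_i,y_i)-c(x_i,y_i)\ge 0$ for all cycles $(x_i,y_i)\in\Gamma$.

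Next, the lower bound. I would argue that any admissible $f$ on $\Gamma$ must satisfy $\int f\,d\pi\ge\alpha$. Given any $\sigma\in\Pi(\mu,\nu)$, the signed measure $\pi-\sigma$ has zero $X$- and $Y$-marginals; by disintegrating both measures against $\mu$ and applying Jankov--von Neumann to select, in a Borel way, matchings between the conditional masses of $\pi$ and $\sigma$ over each $x$, one represents $\pi-\sigma$ as a (countable, signed) mixture of elementary cycles supported in $\Gamma\cup\operatorname{supp}\sigma$. Integrating the cyclic inequality against this mixture yields $\int f\,d\pi\ge I_c[\pi]-I_c[\sigma]$, and taking $\sigma$ near-optimal gives $\int f\,d\pi\ge\alpha$.

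For the opposite inequality, I would fix a minimizing sequence $\sigma_n\in\Pi(\mu,\nu)$ with $I_c[\sigma_n]\downarrow I_c$, and for each $n$ use the above cycle representation of $\pi-\sigma_n$ to define a Borel $f_n:\Gamma\to[0,\infty]$ as the per-point cyclic defect witnessed by that representation, with $\int f_n\,d\pi\le I_c[\pi]-I_c[\sigma_n]+1/n$. A Koml\'os / Fatou-type extraction in the cone of non-negative Borel functions on $(\Gamma,\pi)$ then yields a candidate limit $f$, and the two-sided estimate forces $\int f\,d\pi=\alpha$.

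The main obstacle is the last step: a single $f_n$ only satisfies the cyclic inequality for cycles realized in the decomposition of $\pi-\sigma_n$, not for all cycles in $\Gamma$. To upgrade to the full cyclic constraint I would pass through the ``sandwich'' mechanism previewed in the introduction (to be formalized later as Proposition~\ref{SandwichTheorem}): given any candidate subsidy $f$ with $\int f\,d\pi\le\alpha$, replacing $f$ by the smaller $\tilde f(x,y)=c(x,y)-[\phi(x)+\psi(y)]$, where $\phi,\psi$ are extracted by sandwiching $c$ between itself and $c-f$, yields a subsidy automatically satisfying the global cyclic inequality (since $c-\tilde f=\phi+\psi$ is additively separable) and with integral still bounded by $\alpha$; combined with the lower bound this forces equality. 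The measurability of $\tilde f$, and the Borel construction of the Koml\'os limit, are the remaining technical issues and are handled by standard measurable-selection arguments in the Polish setting.
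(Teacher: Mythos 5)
There is a genuine gap, and you have in fact located it yourself: the step that produces, for each fixed cycle length $n$, a subsidy $f_n$ satisfying the cyclic inequality for \emph{all} $(x_1,y_1),\dots,(x_n,y_n)$ --- not merely for the cycles appearing in some decomposition of $\pi-\sigma_n$ --- with $\int f_n\,d\pi\le\alpha+1/n$. Your proposed fix, namely feeding the partially constrained $f$ into Proposition \ref{SandwichTheorem} and replacing it by $\tilde f=c-(\phi+\psi)$, is circular: the hypothesis (\ref{SandwichMonotonicity}) of the sandwich theorem is precisely the global cyclic inequality $\sum_i \ovlc(x_{i+1},y_i)-\unlc(x_i,y_i)\ge 0$ for all cycles, i.e.\ exactly the conclusion you are trying to reach. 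The sandwich theorem converts a globally admissible subsidy into a separable one; it cannot upgrade a subsidy that is admissible only along a prescribed family of cycles. Moreover, the device you invoke to build the $f_n$ in the first place --- representing $\pi-\sigma_n$ as a countable signed mixture of elementary cycles --- is not available for general couplings on Polish spaces; the failure of such cycle decompositions in the continuum is precisely why the relation between $c$-cyclical monotonicity and optimality is delicate (cf.\ the Ambrosio--Pratelli example). The same objection applies to your argument for the lower bound $\int f\,d\pi\ge\alpha$, which in any case is the less essential half: only $\int f\,d\pi\le\alpha$ is used in deriving Theorem \ref{DualMaximizers}.

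The mechanism actually used for the missing step is a duality in its own right: Kellerer's theorem on the $n$-fold product $(X\times Y)^n$ applied to the cyclic-gain function $e(x_1,y_1,\dots,x_n,y_n)=\bigl(\sum_i c(x_{i+1},y_i)-c(x_i,y_i)\bigr)_-$ (Corollary \ref{KellererCorollary}), whose hypothesis $\sup\{\int e\,d\kappa:\kappa\in\Pi(\pi,\dots,\pi)\}\le n\alpha$ is verified in Lemma \ref{KellererMeasure} by turning an arbitrary multi-coupling $\kappa$ into an explicitly improved transport plan $\pi_\beta$ and invoking $I_c[\pi_\beta]\ge I_c$. This replaces the nonexistent cycle decomposition by a measure-theoretic rerouting argument and yields the full length-$n$ constraint in one stroke. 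Your closing Koml\'os step is essentially the paper's (one also needs that $f_{np}$ satisfies the length-$n$ constraint, whence the subsequence $f_{n!}$ so that the limit works for every $n$ simultaneously), but without the Kellerer input there is nothing for it to act on.
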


The main ingredient in the proof Proposition
\ref{MonotonicityViaOptimality} is the following duality theorem due
to Kellerer  (see \cite[Lemma 1.8(a), Corollary 2.18]{Kell84} and
\cite[p 212]{Kell85}).

\begin{theorem}[Kellerer]\label{KellererDuality}
  Let $Z_1,\ldots,Z_n,n\geq 2$ be Polish spaces equipped with Borel probability
   measures $\pi_1,\dotsc,\pi_n$ and assume that
   $C:Z= Z_1\times\ldots\times Z_n\to[-\infty,\infty)$ is Borel measurable and
   that $b = \sup_Z C$ is finite.
  Set \begin{align}
I_C:=&\inf\left\{ \int_Z C\ d\kappa: \kappa\in\Pi(\pi_1,\dotsc,\pi_n)\right\},\\
S_C:=&\sup\left\{\sum_{i=1}^n\int_{Z_i} \phi_i \, d\pi_i:
C(z_1,\dotsc,z_n)\geq \sum_{i=1}^n \phi_i(z_i)  \right\},
\end{align} where $\phi_1,\ldots, \phi_n$ are Borel functions taking
values in $[-\infty, \infty]$.

 Then $I_C=S_C$.
\end{theorem}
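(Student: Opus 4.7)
The easy direction $S_C\leq I_C$ is immediate: for any admissible $(\phi_1,\ldots,\phi_n)$ and any $\kappa\in\Pi(\pi_1,\ldots,\pi_n)$, integrating $\sum_i \phi_i(z_i)\leq C(z_1,\ldots,z_n)$ against $\kappa$ and invoking the marginal constraints yields $\sum_i\int\phi_i\,d\pi_i\leq\int C\,d\kappa$. The substance of the theorem is the reverse inequality $I_C\leq S_C$, and the plan is to establish it by progressively weakening the regularity of $C$.

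First, for continuous bounded $C$, duality reduces to the classical multi-marginal Kantorovich statement: $\Pi(\pi_1,\ldots,\pi_n)$ is weakly compact by Prokhorov's theorem, and a Hahn-Banach separation in the space of bounded continuous functions on $Z$ (equivalently, a minimax argument applied to the bilinear pairing $(\phi,\kappa)\mapsto\int[C-\sum_i\phi_i]\,d\kappa+\sum_i\int\phi_i\,d\pi_i$) produces dual maximizers $\phi_i\in C_b(Z_i)$ witnessing $S_C=I_C$. For bounded upper semi-continuous $C$, approximate from above by continuous $C_k\downarrow C$; upper semi-continuity of $\kappa\mapsto\int C\,d\kappa$ on the compact set $\Pi(\pi_1,\ldots,\pi_n)$, together with monotone approximation on the dual side, yields $I_C=S_C$ at the limit.

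The critical passage is from bounded upper semi-continuous to bounded Borel $C$, and here I would follow Kellerer's capacity-theoretic approach. Define
\begin{equation*}
\Lambda(g):=\sup\!\left\{\sum_{i=1}^n\int\phi_i\,d\pi_i:\phi_i\ \text{Borel},\ \sum_i\phi_i(z_i)\leq g(z_1,\ldots,z_n)\right\}
\end{equation*}
for bounded Borel $g:Z\to\R$, and verify that $\Lambda$ possesses the continuity axioms of a Choquet capacity: monotonicity, upper continuity along decreasing sequences of bounded upper semi-continuous functions (which is the content of the previous step), and lower continuity along increasing sequences of bounded Borel functions. Choquet's capacitability theorem then extends the identity $\Lambda(C)=I_C$, known on upper semi-continuous functions, to arbitrary bounded Borel $C$.

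Finally, to drop the assumption that $C$ is bounded below, truncate $C_N:=C\vee(-N)$, apply the bounded-Borel case to each $C_N$, and pass to the limit $N\to\infty$ using monotone convergence on the primal side and the elementary fact that admissible $\phi_i$'s for $C$ remain admissible for every $C_N$ on the dual side. The main obstacle is the capacitability verification in the third step: lower continuity of $\Lambda$ along increasing sequences of Borel functions in the multi-marginal setting requires careful tightness and measurable selection arguments that do not simplify in any obvious way, whereas the other two pieces of the capacity structure follow directly from weak compactness and the upper semi-continuous duality already in hand.
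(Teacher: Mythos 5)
The paper offers no proof of this statement: it is quoted as a black box from Kellerer (\cite[Lemma 1.8(a), Corollary 2.18]{Kell84} and \cite[p 212]{Kell85}), so your proposal can only be measured against Kellerer's original argument. Its architecture --- trivial inequality, minimax on a core class, Choquet capacitability, truncation --- is indeed the right one, and is Kellerer's.

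There is, however, a genuine gap, and it sits exactly where you flag it: the monotone continuity of the \emph{dual} functional. For the infimum problem with constraint $\sum_i\phi_i\leq C$, every one of your limit passages needs $S_{C_k}\downarrow S_C$ along a \emph{decreasing} sequence $C_k\downarrow C$, and none of them gets it for free. In the step from continuous to upper semi-continuous $C$ via $C_k\downarrow C$, the near-optimal dual tuples for $C_k$ satisfy $\sum_i\phi_i\leq C_k$ but not $\sum_i\phi_i\leq C$, so they are not admissible for $C$; ``monotone approximation on the dual side'' does not close this. The same happens in the final truncation $C_N=C\vee(-N)\downarrow C$: your ``elementary fact'' (admissible for $C$ implies admissible for $C_N$) only gives $S_{C_N}\geq S_C$, which combined with $\inf_N I_{C_N}=I_C$ re-derives the trivial inequality $S_C\leq I_C$ rather than the one you need. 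Relatedly, the capacity axioms are assigned to the wrong monotone directions for the infimum problem: the continuity that is automatic (by monotone convergence of $\int C_k\,d\kappa$ and commuting infima) is that of the \emph{primal} functional along decreasing sequences of arbitrary Borel functions; the core class on which duality extends cheaply from bounded continuous $C$ is the \emph{lower} semi-continuous one, because $C_k\uparrow C$ preserves dual admissibility and $I_{C_k}\uparrow I_C$ follows from weak compactness of $\Pi(\pi_1,\dotsc,\pi_n)$ (the argument already used in the paper's discussion of continuity of $I_c$); and the axiom that actually carries the weight is the downward continuity of $S$ along arbitrary decreasing Borel sequences. Proving that last property --- Kellerer does it by normalizing a sequence of near-optimal dual tuples and extracting a limiting admissible tuple via a compactness argument in the space of dual functions --- \emph{is} the theorem; as written, the proposal is a correct roadmap with its central step missing.
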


We will use it in the following form:
\begin{corollary}\label{KellererCorollary} Let $Z$
be a Polish space equipped with a Borel probability measure $\pi$.
Let $e:Z^n\to [0,\infty]$ be a Borel measurable function such   that
$e(z_1,z_2,\ldots,z_{n-1},z_n)=e(z_2,z_3,\ldots,z_n,z_1)$ for all
$z_1,\ldots,z_n\in Z$ and let $\alpha\geq 0$. Assume that
\begin{align}\sup\left\{\int_{Z^n} e\, d\kappa:\kappa\in
\Pi(\pi,\ldots,\pi)\right\}\leq n\alpha.\end{align} Then, for
$\delta>0$, there exists a function $f:Z\to [0,\infty]$ such that
$e(z_1,\ldots,z_n)\leq f(z_1)+\ldots+f(z_n)$ and $\int
f\,d\pi<\alpha+\delta$.
\end{corollary}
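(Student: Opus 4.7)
The plan is to apply Kellerer's duality (Theorem \ref{KellererDuality}) to the cost function $C:=-e$ on the product $Z\times\cdots\times Z$ (each factor equipped with $\pi$), and then to use the cyclic symmetry of $e$ to collapse the resulting $n$-tuple of dual functions into a single $f$. Since $e\geq 0$, the function $C:Z^n\to[-\infty,0]$ is Borel with $\sup C\leq 0<\infty$, so Kellerer's hypothesis is met. In the Kellerer dual I substitute $g_i:=-\phi_i$, which rewrites the constraint $\sum_i\phi_i(z_i)\leq C(z_1,\ldots,z_n)$ as $e(z_1,\ldots,z_n)\leq\sum_i g_i(z_i)$, so that the identity $I_C=S_C$ takes the form
\begin{align*}
\sup\left\{\int_{Z^n}e\,d\kappa:\kappa\in\Pi(\pi,\ldots,\pi)\right\}=\inf\left\{\sum_{i=1}^n\int_Z g_i\,d\pi:e(z_1,\ldots,z_n)\leq\sum_{i=1}^n g_i(z_i)\right\}.
\end{align*}
By hypothesis the left-hand side is at most $n\alpha$, so for the given $\delta>0$ I can select Borel functions $g_1,\ldots,g_n:Z\to[-\infty,\infty]$ with $e(z_1,\ldots,z_n)\leq\sum_i g_i(z_i)$ pointwise and $\sum_i\int g_i\,d\pi<n(\alpha+\delta)$.

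Next I exploit the cyclic invariance of $e$. For each cyclic shift $\sigma^k$ of $\{1,\ldots,n\}$, the assumption $e(z_1,\ldots,z_n)=e(z_{\sigma^k(1)},\ldots,z_{\sigma^k(n)})$ gives $e(z_1,\ldots,z_n)\leq\sum_i g_i(z_{\sigma^k(i)})$. Averaging over $k=0,\ldots,n-1$ and noting that for each fixed $i$ the index $\sigma^k(i)$ visits every $j\in\{1,\ldots,n\}$ exactly once as $k$ runs over a full cycle, one arrives at
\begin{align*}
e(z_1,\ldots,z_n)\leq\frac{1}{n}\sum_{j=1}^n\sum_{i=1}^n g_i(z_j)=\sum_{j=1}^n f(z_j),\qquad f:=\frac{1}{n}\sum_{i=1}^n g_i.
\end{align*}
Integrating with respect to $\pi$ yields $\int f\,d\pi=\frac{1}{n}\sum_i\int g_i\,d\pi<\alpha+\delta$, as required. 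Non-negativity of $f$ is then automatic: specialising $z_1=\cdots=z_n=z$ in the previous display gives $nf(z)\geq e(z,\ldots,z)\geq 0$ for every $z\in Z$, so $f$ takes values in $[0,\infty]$ and is Borel as a sum of Borel functions.

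The step I expect to need most care is the rigorous handling of the $[-\infty,\infty]$-valued dual functions coming out of Kellerer's theorem — in particular, making sure that the integrals $\int g_i\,d\pi$ and the pointwise sum defining $f$ are unambiguously defined under the convention $\infty-\infty=\infty$. If necessary this can be side-stepped by first running the entire argument with the bounded truncation $e\wedge M$ in place of $e$, obtaining bounded $f_M$, and then passing to $M\to\infty$ using monotone convergence for $\int f_M\,d\pi$ together with the elementary observation that the pointwise inequality $e\wedge M\leq\sum_j f_M(z_j)$ survives the limit.
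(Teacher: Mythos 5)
Your proof is correct and takes essentially the same route as the paper's: apply Kellerer's duality to $C=-e$ and then symmetrize the resulting dual functions by averaging over the cyclic shifts of the coordinates. Your extra observations --- deducing $f\geq 0$ by specialising to the diagonal, and flagging the $\infty-\infty$ conventions --- are sensible refinements of points the paper leaves implicit.
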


\begin{proof}
Applying Kellerer's Theorem to the function $C=-e$, we find
functions $f_1,\ldots, f_n:Z\to [0,\infty]$ such that
$e(z_1,\ldots,z_n)\leq f_1(z_1)+\ldots+f_n(z_n)$ and $\int f_1\,
d\pi+\ldots+\int f_n\, d\pi <n(\alpha+\delta)$. Set
$f(z)=(f_1(z)+\ldots+f_n(z))/n$.  Then
\begin{align}
e(z_1,\ldots,z_n) &=\frac1n\sum_{k=0}^{n-1} e(\sigma^k(z_1,\ldots,z_n))\\
&\leq \frac1n\sum_{k=0}^{n-1} f_1(z_{1+k})+\ldots +f_n(z_{n+k})\\
&=\frac1n \sum_{k=0}^{n-1}
f_1(z_{1+k})+\ldots+f_n(z_{1+k})=\sum_{k=0}^{n-1} f(z_{1+k}),
\end{align}
where $\sigma(r_1,\ldots,r_n)=(r_2,\ldots,r_{n+1})$.
\end{proof}

\begin{lemma}\label{KellererMeasure}  Let $X,Y$ be Polish spaces
equipped with Borel probability measures $\mu,\nu$, let $c:X\times
Y\to [0,\infty]$ be Borel measurable and assume that $\pi$ is a
finite transport plan satisfying $ I_c[\pi]\leq I_c+\alpha$. Set
\begin{align} e(x_1,y_1,\ldots x_n,y_n):=\Big(\sum_{i=1}^n
c(x_{i+1},y_i)-c(x_i,y_i)\Big)_{-}\geq 0.\end{align} Then
\begin{align}\label{BoundToImprovement}
\sup\left\{\int_{(X\times Y)^n} e\, d\kappa:\kappa\in
\Pi(\pi,\ldots,\pi)\right\}\leq n\alpha.
\end{align}
\end{lemma}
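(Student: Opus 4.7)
The plan is to associate to every coupling $\kappa\in\Pi(\pi,\ldots,\pi)$ a new transport plan $\tilde\pi\in\Pi(\mu,\nu)$ obtained by cyclically rerouting ``$x_i\to y_i$'' to ``$x_{i+1}\to y_i$'' precisely on the set $A:=\{e>0\}$ where this swap strictly decreases the total cost. If done carefully enough to preserve marginals, the construction should yield $I_c[\tilde\pi]=I_c[\pi]-\tfrac{1}{n}\int e\,d\kappa$, and combining this with $I_c[\tilde\pi]\geq I_c$ and the hypothesis $I_c[\pi]\leq I_c+\alpha$ immediately gives $\int e\,d\kappa\leq n\alpha$. Taking the supremum over $\kappa$ finishes the proof.

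To carry this out I would first note that $e$ is invariant under the cyclic shift $\sigma\colon (x_1,y_1,\ldots,x_n,y_n)\mapsto (x_2,y_2,\ldots,x_n,y_n,x_1,y_1)$, since the sum $\sum_{i=1}^n[c(x_{i+1},y_i)-c(x_i,y_i)]$ itself is. Hence replacing $\kappa$ by the average $\tfrac{1}{n}\sum_{k=0}^{n-1}\sigma^k_{\#}\kappa$ preserves both membership in $\Pi(\pi,\ldots,\pi)$ and the integral $\int e\,d\kappa$, so I may assume $\kappa$ itself is $\sigma$-invariant; then the Borel set $A$ is $\sigma$-invariant as well. At this point I would define $\tilde\pi$ on Borel $B\subseteq X\times Y$ by
\begin{align}
\tilde\pi(B):=\tfrac{1}{n}\sum_{i=1}^n\Big[\kappa\big(A\cap\{(x_{i+1},y_i)\in B\}\big)+\kappa\big(A^c\cap\{(x_i,y_i)\in B\}\big)\Big].
\end{align}

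Testing $\tilde\pi$ on rectangles, the $X$-marginal reads $\tfrac{1}{n}\sum_i\big[(x_{i+1})_{\#}(\kappa|_A)+(x_i)_{\#}(\kappa|_{A^c})\big]$; by $\sigma$-invariance of $\kappa$ and $A$ every summand of the first type equals $(x_1)_{\#}(\kappa|_A)$ and every summand of the second equals $(x_1)_{\#}(\kappa|_{A^c})$, so the sum collapses to $(x_1)_{\#}\kappa=\mu$. The $Y$-marginal is $\nu$ since only $y_i$ (never $y_{i+1}$) appears in the definition. For the cost, on $A$ one has $\sum_i c(x_{i+1},y_i)=\sum_i c(x_i,y_i)-e$ while on $A^c$ one has $e=0$; the bound $e\leq\sum_i c(x_i,y_i)$ (which holds since $c\geq 0$) makes every term $\kappa$-integrable, so the subtraction is legitimate in spite of the fact that $c$ may take the value $\infty$. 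This produces $I_c[\tilde\pi]=I_c[\pi]-\tfrac{1}{n}\int e\,d\kappa$, and the argument outlined in the first paragraph concludes the proof.

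The main obstacle is the marginal check: unconditional rerouting (swapping everywhere) would trivially give a transport plan but only the integrated inequality $\int\sum_i[c(x_i,y_i)-c(x_{i+1},y_i)]\,d\kappa\leq n\alpha$ rather than the desired pointwise positive-part bound, whereas conditional rerouting on $A$ in general destroys the marginal structure. The cyclic symmetrization of $\kappa$ in step one is precisely what reconciles these two requirements.
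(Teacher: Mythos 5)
Your proposal is correct and follows essentially the same route as the paper: symmetrize $\kappa$ under the cyclic shift $\sigma$, reroute only on the $\sigma$-invariant set $\{e>0\}$ while keeping the original pairing on its complement, verify the marginals via $\sigma$-invariance, and read off the cost saving $\tfrac1n\int e\,d\kappa$ to conclude from $I_c[\tilde\pi]\geq I_c$. Your one-formula definition of $\tilde\pi$ coincides (after the harmless relabeling of $(x_{i+1},y_i)$ versus $(x_i,y_{i+1})$) with the paper's decomposition $\pi_\beta=(\pi-\tilde\pi)+\tilde\pi_\beta$, so the two arguments are the same in substance.
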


\begin{proof}
Denote by $\sigma, \tau: (X\times Y)^n  \to (X\times Y)^n$ the mappings 
\begin{align}
\sigma: (x_i,y_i)_{i=1}^n  &\mapsto  (x_{i+1},y_{i+1})_{i=1}^n \\
  \tau: (x_i,y_i)_{i=1}^n  &\mapsto \hspace{2ex}(x_{i},y_{i+1})_{i=1}^n.
\end{align}
Observe that $\sigma^n=\tau^n=\mbox{Id}_{(X\times Y)^n}$ and that $\sigma$ and $\tau$ commute.  By $p_i: (X\times Y)^n \to X\times Y, (x_1,y_1,\ldots, x_n,y_n)\mapsto (x_i,y_i)$ we denote the projection on the $i$-th component of the product, while the projections $p_X: X\times Y \to X, (x,y)\mapsto x$ and $p_Y: X\times Y \to Y, (x,y)\mapsto y$ are defined as above.

Pick $\kappa \in \Pi(\pi,\ldots,\pi)$. By replacing $\kappa$ with 
\begin{align}
\frac1n
\big(\kappa +\sigma\push \kappa+\ldots+(\sigma)^{n-1}\push\kappa\big),
\end{align}
   we may assume that $\kappa$ is $\sigma$-invariant.  Set
$B=\{e>0\}$ and consider the restriction of $\kappa$ to $B$ defined
by $ \tilde \kappa(A)= \kappa(A\cap B)$ for Borel sets $A\subseteq
(X\times Y)^n$. $\tilde \kappa$ is $\sigma$-invariant since both the
measure $\kappa$ and the set $B$ are $\sigma$-invariant. Denote the
marginal of $\tilde \kappa$ in the first coordinate $(X\times Y)$ of
$(X\times Y)^n$ by $\tilde \pi$. Due to $\sigma$-invariance we have
\begin{align} p_i\push\tilde \kappa =p_i\push(\sigma\push\tilde \kappa)\\
=(p_i\circ \sigma)\push\tilde \kappa=p_{i+1}\push\tilde \kappa,\end{align} i.e.\
all marginals coincide and we have $\tilde \kappa \in
\Pi(\tilde \pi,\ldots,\tilde \pi)$. Furthermore, since
$\tilde \kappa\le\kappa$, the same is true for the marginals, i.e.\
$\tilde \pi\le \pi$. Denote the marginal $p_1\push (\tau\push\tilde  \kappa)$ of $\tau\push\tilde  \kappa$ in
the first coordinate $(X\times Y)$ of $(X\times Y)^n$ by
$\tilde \pi_{\beta}$. As $\sigma$ and $\tau$ commute,
$\tau\push\tilde \kappa$ is  $\sigma$-invariant, so the marginals in
the other coordinates coincide with $\tilde \pi_{\beta}$. 
Note that $\tilde  \pi_\beta(X\times Y)= \tilde \kappa\big((X\times Y)^n\big)=\tilde  \pi (X\times Y).$ Moreover  $\tilde {\pi}$ and $\tilde \pi_{\beta}$ have  the
same marginals in $X$ resp.\ $Y$. Indeed, let $C\subseteq X,D\subseteq Y$ be
Borel sets. Then
\begin{align}
p_X\push\tilde \pi_{\beta}(C)&=
\tau\push\tilde \kappa ((p_X\circ p_1\circ\tau)^{-1}[C])=\\
&=\tau\push\tilde \kappa\{(x_1,\ldots,y_n):x_1\in C\}=\\
&= \tilde  \kappa \{(x_1,\ldots,y_n):x_1\in C\}=p_X\push\tilde \pi(C)\\
p_Y\push\tilde \pi_{\beta}(D)&=
\tau\push\tilde \kappa ((p_Y\circ p_1\circ\tau)^{-1}[D])=\\
&=\tau\push\tilde \kappa\{(x_1,\ldots,y_n):y_1\in D\}=\\
&= \tilde  \kappa \{(x_1,\ldots,y_n):y_2\in D\}=p_Y\push\tilde \pi(D).
\end{align}
This enables us to define an improved transport plan by
\begin{equation}\label{BetterMeasure}
\pi_{\beta}=(\pi-\tilde \pi)+\tilde \pi_{\beta}.
\end{equation}
Since $\tilde \pi\leq \pi$, we have that $(\pi-\tilde \pi)$ is a positive measure, hence (\ref{BetterMeasure}) defines a positive measure as well. Since $\tilde \pi$ and $\tilde \pi_{\beta}$ have the same total mass, $\pi_{\beta}$ is a probability measure. Furthermore $\tilde \pi$ and $\tilde \pi_{\beta}$ have the same marginals in $X$, resp.\ $Y$, so $\pi_{\beta}$ is indeed a transport plan.
It remains to apply the assumption that the transport costs of $\pi_\beta$ cannot be cheaper by more than $\alpha$  than the ones of $\pi$:
\begin{align}
\alpha &\geq I_c[\pi]-I_c[{\pi_\beta}]\\
&=\int_{X\times Y} c\, d(\tilde \pi-\tilde \pi_\beta)\\
&=\frac1n \sum_{i=1}^n \int_{(X\times Y)^n} c \circ p_i\, d (\tilde \kappa-\tau\push\tilde \kappa)\\
&=\frac1n \sum_{i=1}^n \int_{(X\times Y)^n} c(x_i,y_i)-c(x_{i+1},y_i)\, d \tilde \kappa(x_1,\ldots y_n)\\
&=\frac1n \sum_{i=1}^n \int_{B} c(x_i,y_i)-c(x_{i+1},y_i)\, d \kappa(x_1,\ldots y_n)\\
&= \frac1n\int_B e(x_1,\ldots,y_n)\, d\kappa\\
&= \frac1n\int_{(X\times Y)^n} e(x_1,\ldots,y_n)\, d\kappa.
\end{align} Since $\kappa\in\Pi(\pi,\ldots,\pi)$ was arbitrary, this yields (\ref{BoundToImprovement}).
\end{proof}

In the proof of Proposition \ref{MonotonicityViaOptimality} we shall apply a result of Koml\'os (\cite{Koml67}).
\begin{lemma}
\label{komlos} Let $(f_{n})_{n\geq1}$ be a sequence of measurable
$[0,\infty]$-valued functions on a probability space $(Z,\pi)$ such that $\sup_{n\geq 1} \| f_n\|_1<\infty$.
 Then there exists a subsequence $(\tilde f_n)_{n\geq 1}$ such that the functions \begin{align}
 \frac1n\big(\tilde f_1+\ldots+\tilde f_n\big), \quad
n\geq 1\end{align} converge $\pi$-a.e.\ to a function taking values in $[0,\infty]$. 

In particular, there exist functions  $g_n\in\operatorname*{conv}(f_{n},f_{n+1},\dots)$ such that $(g_n)_{n\geq 1}$ converges $\pi$-a.e.. 
\end{lemma}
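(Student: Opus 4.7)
The plan is to reduce directly to Koml\'os's classical theorem \cite{Koml67}. Since $\|f_n\|_1 < \infty$ each $f_n$ is $\pi$-a.e.\ finite, so (after modifying on null sets) $(f_n)$ is an $L^1$-bounded sequence in $L^1(Z,\pi)$. Koml\'os's theorem produces a subsequence $(\tilde f_n)=(f_{n_k})_{k\geq 1}$ whose Ces\`aro means $\frac{1}{n}\sum_{i=1}^n \tilde f_i$ converge $\pi$-a.e.\ to a function $f$. Non-negativity of the $\tilde f_n$ yields $f\geq 0$ and Fatou's lemma gives $f\in L^1$, hence $f$ is $\pi$-a.e.\ finite; so in particular $f$ takes values in $[0,\infty]$. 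This settles the first assertion.

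For the ``in particular'' clause I need convex combinations whose summands come from the tail of the original sequence. I propose to set
\begin{equation*}
g_n \;:=\; \frac{1}{n}\sum_{i=n+1}^{2n} \tilde f_i.
\end{equation*}
This is manifestly a convex combination of $\tilde f_{n+1},\ldots,\tilde f_{2n}$; since $\tilde f_i = f_{n_i}$ with $n_i\geq i\geq n+1\geq n$, every summand carries an original-sequence index $\geq n$, hence $g_n \in \operatorname*{conv}(f_n, f_{n+1},\ldots)$ as required. The algebraic identity
\begin{equation*}
g_n \;=\; 2\cdot\frac{1}{2n}\sum_{i=1}^{2n}\tilde f_i \;-\; \frac{1}{n}\sum_{i=1}^{n}\tilde f_i,
\end{equation*}
valid on the full-measure set where the Ces\`aro means are finite, combined with the $\pi$-a.e.\ convergence of both Ces\`aro means to $f$, gives $g_n \to 2f - f = f$ $\pi$-a.e., which is the required convergence.

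The only deep input is Koml\'os's theorem itself, which the paper cites and I treat as a black box (its own proof uses a truncation and diagonal extraction argument of moderate depth). Everything after that is the routine rearrangement above, so no real obstacle remains; the only point of care is to ensure the algebraic identity is applied $\pi$-a.e., which is automatic since $f$ is integrable.
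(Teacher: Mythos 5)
Your proof is correct, and it follows the paper's strategy: treat Koml\'os's theorem as a black box for the first assertion and obtain the ``in particular'' clause by forming block averages of the Koml\'os subsequence drawn from the tail. The only divergence is in how the convergence of those block averages is established. You use the exact identity $g_n=\frac1n\sum_{i=n+1}^{2n}\tilde f_i=2S_{2n}-S_n$ (with $S_m$ the $m$-th Ces\`aro mean) and let both Ces\`aro sequences converge to the same finite limit $f$, so $g_n\to 2f-f=f$. The paper instead takes blocks $g_k=\frac{1}{n_k}(\tilde f_{k+1}+\cdots+\tilde f_{n_k+k})$ with block lengths $n_k$ chosen so large that $\pi\{\frac{1}{n_k}(\tilde f_1+\cdots+\tilde f_k)>\frac1k\}<2^{-k}$, whence the subtracted ``head'' tends to $0$ a.e.\ by Borel--Cantelli while the full average tends to the Koml\'os limit. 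Your version is slightly cleaner: it needs no measure-theoretic selection of block lengths and no Borel--Cantelli step, only the a.e.\ finiteness of the limit, which you correctly secure via Fatou (the paper's variant, by contrast, does not even need the limit to be finite). Both arguments are complete; the one point you rightly flag --- that the algebraic rearrangement must be performed on the full-measure set where all the $\tilde f_i$ and the limit are finite --- is handled by the countability of the exceptional null sets.
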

\begin{proof}
The first part of Lemma \ref{komlos} is Koml\'os' original result which we will not prove.
The assertion that there exist 
$g_n\in\operatorname*{conv}(f_{n},f_{n+1},\dots)$ such that $(g_n)_{n\geq 1}$ converges almost everywhere\footnote{In fact, this result holds true without \emph{any} integrability assumptions, see \cite[Lemma A1.1]{DeSc94}.} is a simple consequence which we will derive for the sake of completeness. Assume that  $\frac1n\big(\tilde f_1+\ldots+ \tilde f_n\big)$ converges $\pi$-a.e.\ to a function $g$. Since all functions $f_n, n\geq 1$ are $\pi$-a.e.\ finitely valued, for each $k\geq 1$ there exists some $n_k$ such that 
\begin{align}
\pi\left(\left\{\frac 1n_k \big(\tilde f_1+\ldots+\tilde f_k\big)>\frac 1k\right\}\right)<\frac1{2^k}. 
\end{align} 
Set $g_k=\frac{1}{n_k}\big(\tilde f_{k+1}+\ldots+\tilde f_{n_k+k}\big)$.
\end{proof}

\begin{proof}[Proof of Proposition \ref{MonotonicityViaOptimality}.]
Combining Lemma \ref{KellererMeasure} and
Corollary \ref{KellererCorollary} we achieve that for
each $n\geq 2$ there exists a function $f_n$ such that
$\int f_n\, d\pi \leq\alpha+1/n$ and  for  all
 $(x_1, y_1),\ldots,(x_n,y_n)\in X\times Y$
\begin{align}\label{CmonN}
\sum_{i=1}^n c(x_{i+1},y_i)+f_n(x_i,y_i)-c(x_{i},y_i)\geq 0.\end{align}
Observe that for all $p\geq 1$ the function $f_{np}$ satisfies
(\ref{CmonN}) as well since we can run through the cycle
\begin{align} (x_1,y_1)\to \ldots \to (x_n,y_n)\to (x_1,y_1)
\to\ldots\end{align} $p$ times. Also note that any convex
combination and any pointwise limit of functions which satisfy
(\ref{CmonN}) for some fixed $n$ satisfies (\ref{CmonN}) (for the
same $n$) as well. Thus we may apply Lemma \ref{komlos} to find
functions $g_n\in\operatorname*{conv}(f_{n!},f_{(n+1)!},\dots)$
which converge $\pi$-almost everywhere. Defining $g$ as the pointwise limit where this limit exists and $\infty$ elsewhere, yields a function which satisfies
(\ref{CmonN}) for every $n$. Moreover
\begin{align}\int g\,d\pi=\int \liminf_{n\to \infty} g_n\, d\pi\leq
\liminf_{n\to \infty} \int g_n\, d\pi=\alpha,\end{align}
hence the desired bound for the total costs of subsidy holds true.
\end{proof} 

\subsection {Dual maximizers by subsidized transport plans}
The main goal of this section is to prove the sandwich-type result announced above.
\begin{proposition}\label{SandwichTheorem}
Assume that $X,Y$ are Polish spaces equipped with Borel probability measures $\mu,\nu$, that $\ovlc:X\times Y\to (-\infty,\infty]$ is Borel measurable and $\mu\otimes \nu$-a.e.\ finite and that $\unlc:X\times Y\to [-\infty,\infty)$ is Borel measurable. If
\begin{equation}\label{SandwichMonotonicity}
\sum_{i=1}^n \ovlc(x_{i+1},y_i)-\unlc (x_i,y_i)\geq 0
\end{equation} for all $x_1,\ldots, x_n\in X$, $y_1,\ldots,y_n\in Y$, there exist Borel measurable functions $\phi: X\to [-\infty,\infty),\psi: Y\to [-\infty,\infty)$ and Borel sets $X'\subseteq X, Y'\subseteq Y$ of full measure  such that
\begin{align}
\unlc(x,y)\leq \phi(x)+\psi (y)\leq \ovlc (x,y),
\end{align}
where the lower bound holds for $x\in X',y\in Y'$ and the upper bounded is valid  for all $x\in X, y\in Y$.
\end{proposition}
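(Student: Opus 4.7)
My plan is to adapt Rockafellar's classical potential construction to the sandwich hypothesis (\ref{SandwichMonotonicity}), then to resolve the measurability issues arising in the Borel (non-lower-semi-continuous) setting via analytic-set theory.

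Applying (\ref{SandwichMonotonicity}) with $n=1$, $x_1=x_2=x$ gives $\unlc\leq\ovlc$ pointwise, so $\unlc$ is also $\mu\otimes\nu$-a.e.\ finite. By Fubini I fix a basepoint $(\bar x,\bar y)$ with $\ovlc(\bar x,\bar y)<\infty$, $\unlc(\bar x,\bar y)>-\infty$, and $\ovlc(\cdot,\bar y)<\infty$ $\mu$-a.e.\ (the degenerate case $\unlc\equiv-\infty$ $\mu\otimes\nu$-a.e.\ is handled at once by $\phi\equiv-\infty$, $\psi\equiv 0$). Define
\begin{equation*}
\phi(x):=\inf\Bigl\{\sum_{i=1}^n\bigl[\ovlc(x_{i+1},y_i)-\unlc(x_i,y_i)\bigr]:n\geq 1,\ (x_i,y_i)\in X\times Y,\ x_1=\bar x,\ x_{n+1}=x\Bigr\}
\end{equation*}
and $\psi(y):=\inf_{x\in X}\bigl[\ovlc(x,y)-\phi(x)\bigr]$. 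The upper bound $\phi(x)+\psi(y)\leq\ovlc(x,y)$ for all $(x,y)$ is immediate from the definition of $\psi$. The lower bound uses the \emph{concatenation property}: extending any admissible chain ending at $x_{n+1}=x'$ by the step $(x',y),\,x_{n+2}=x$ gives $\phi(x)\leq\phi(x')+\ovlc(x,y)-\unlc(x',y)$ whenever $\phi(x')<\infty$; rearranging and taking $\inf_x$ yields $\phi(x')+\psi(y)\geq\unlc(x',y)$.

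The finiteness of $\phi$ on a full-measure set follows from the one-step chain $(\bar x,\bar y),\,x_2=x$, which gives $\phi(x)\leq\ovlc(x,\bar y)-\unlc(\bar x,\bar y)$, together with (\ref{SandwichMonotonicity}) applied to the cyclically closed chain $(x_i,y_i)_{i=1}^n,(x,\bar y)$ with $x_{n+2}=\bar x$, which gives $\phi(x)\geq\unlc(x,\bar y)-\ovlc(\bar x,\bar y)$. Hence $\phi$ is real-valued on the $\mu$-full Borel set $X':=\{x:\ovlc(x,\bar y)<\infty,\ \unlc(x,\bar y)>-\infty\}$; off $X'$ I set $\phi:=-\infty$, and similarly define a $\nu$-full Borel $Y'$ on which $\psi$ is real-valued, extending $\psi$ by $-\infty$ off $Y'$. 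This preserves the upper bound pointwise (by the convention that $-\infty+a=-\infty$) and the lower bound on $X'\times Y'$ (since for $x\in X'$ the concatenation inequality holds for every $y\in Y$).

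The principal obstacle is that $\phi$ and $\psi$ are a priori only upper semi-analytic, being infima over the uncountable spaces of chains of arbitrary length. I would overcome this using standard analytic-set theory for Polish spaces: each sublevel set $\{\phi<t\}$ is the projection of a Borel subset of $\bigsqcup_n X\times(X\times Y)^n$, hence analytic and in particular universally measurable, so $\phi$ coincides $\mu$-a.e.\ with a Borel function $\tilde\phi$; similarly $\psi$ admits a Borel representative $\tilde\psi$ after a $\nu$-null modification. Replacing $\phi,\psi$ by $\tilde\phi,\tilde\psi$ and shrinking $X',Y'$ by the corresponding null sets yields the required Borel $\phi:X\to[-\infty,\infty)$ and $\psi:Y\to[-\infty,\infty)$ with the upper bound everywhere on $X\times Y$ and the lower bound on $X'\times Y'$.
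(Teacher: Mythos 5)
Your overall strategy --- the Rockafellar-type potential $\phi(x)=\inf\sum_{i}\bigl[\ovlc(x_{i+1},y_i)-\unlc(x_i,y_i)\bigr]$ with a fixed basepoint, then $\psi=\inf_x(\ovlc-\phi)$, followed by an analytic-set/Borel-modification step --- is the same as the paper's, and your concatenation inequality, upper bound, and measurability treatment are fine. The gap is in the claim that $\phi>-\infty$ on a full-measure set. You assert that $X'=\{x:\ovlc(x,\bar y)<\infty,\ \unlc(x,\bar y)>-\infty\}$ is $\mu$-full. The first condition is full-measure by Fubini, but the second is not: nothing in the hypotheses controls $\{\unlc>-\infty\}$ from below. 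Your remark that ``$\unlc\leq\ovlc$, so $\unlc$ is also $\mu\otimes\nu$-a.e.\ finite'' only bounds $\unlc$ from \emph{above}, which is vacuous since $\unlc<\infty$ by assumption; it gives no information about where $\unlc=-\infty$. In the intended application (deriving Proposition \ref{CmonToStrongCmon}), $\unlc$ equals $-\infty$ off a $c$-cyclically monotone set $\Gamma$ carrying a transport plan; $\Gamma$ is typically a graph, hence $\mu\otimes\nu$-null, and then for \emph{every} fixed $\bar y$ the slice $\{x:\unlc(x,\bar y)>-\infty\}$ is $\mu$-null, so your $X'$ is null rather than co-null. The same issue breaks your degenerate-case dichotomy: $\{\unlc>-\infty\}$ can be $\mu\otimes\nu$-null (e.g.\ a diagonal) and still meet every product $X'\times Y'$ of full-measure sets, so $\phi\equiv-\infty$, $\psi\equiv0$ violates the required lower bound, while the ``non-degenerate'' case is not one your single fixed $\bar y$ can handle.

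What is missing is exactly the content of the paper's preliminary reduction and its Lemma \ref{Connectedness}: one first passes, via an infimum of $\tilde\mu\bigl(p_X[\{\unlc>-\infty\}\setminus(X\times N)]\bigr)$ over $\nu$-null sets $N$, to a situation where this projection has full measure, and then produces a basepoint $x_1$ and a full-measure $X'$ such that for each $x\in X'$ there exist points $y_1=y_1(x)$ and $y=y(x)$ \emph{depending on} $x$ with $\unlc(x_1,y_1)>-\infty$, $\ovlc(x,y_1)<\infty$ (yielding $\phi(x)<\infty$) and $\unlc(x,y)>-\infty$, $\ovlc(x_1,y)<\infty$ (yielding $\phi(x)>-\infty$ by closing the cycle through $(x,y)$). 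An alternative repair of your fixed-basepoint version would be to note that if $\phi(x)=-\infty$, then closing the cycle through $(x,y)$ for any $y$ with $\ovlc(\bar x,y)<\infty$ forces $\unlc(x,y)=-\infty$, so the lower bound holds vacuously at such $x$ for all $y$ in the full-measure set $\{y:\ovlc(\bar x,y)<\infty\}$. Some argument of one of these two kinds must be supplied; as written, the lower bound on a full-measure product set is not established.
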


Observe that Proposition  \ref{SandwichTheorem} is in fact a generalization of Proposition \ref{CmonToStrongCmon}: Let $\pi\in \Pi(\mu,\nu)$ be a finite transport plan which is concentrated  on a $c$-cyclically monotone Borel set $\Gamma$. Without loss of generality assume that $c$ is finite on $\Gamma$. Set $\ovlc=c$ and
\begin{align}
\unlc(x,y):=\left\{
\begin{array}{cl}
c(x,y)&\mbox{ if }(x,y)\in \Gamma\\
-\infty&\mbox{ else }
\end{array}
\right..
\end{align} Then $\ovlc,\unlc$
satisfy the assumptions of Proposition \ref{SandwichTheorem} and thus there exist Borel measurable functions $\phi,\psi$ which satisfy $\phi(x)+\psi(y)\leq c(x,y)$ for all $x\in X,y\in Y$ and   $\phi(x)+\psi(y)\geq c(x,y)$ for $\pi$-almost all $(x,y)\in X\times Y$, hence $\pi$ is strongly $c$-cyclically monotone.

\medskip


Before giving the proof of Proposition \ref{SandwichTheorem} we need
some preliminaries, in particular we will recapitulate some facts
from the theory of analytic sets. They will be needed  to deal with
certain measurability issues which arise in the course of the proof.

 Let $X$ be a
Polish space. A set $A\subseteq X$ is \emph{analytic} if there exist
a Polish space $Z$, a Borel measurable function $f:Z\to X$ and a
Borel set $B\subseteq Y$ such that $f(B)=A$.

\begin{lemma}\label{InfIsMeasurable}
Let $X,Z$ be Polish spaces and  $g:X\times Z\to [-\infty, \infty]$ a
Borel measurable function. Set
\begin{align}
\phi(x)=\inf_{z\in Z} g(x,z).
\end{align}
Then $\{\phi<\alpha\}$ is analytic for every $\alpha\in
[\infty,\infty]$.
\end{lemma}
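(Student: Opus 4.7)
The plan is to observe the elementary identity
\begin{align}
\{\phi<\alpha\}=\bigl\{x\in X: \exists\, z\in Z,\ g(x,z)<\alpha\bigr\}=p_X\bigl[\{(x,z)\in X\times Z: g(x,z)<\alpha\}\bigr],
\end{align}
where $p_X:X\times Z\to X$ is the coordinate projection. This reduces the claim to an application of the definition of analyticity that was recalled just before the lemma.

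First I would note that the set $B:=\{(x,z)\in X\times Z: g(x,z)<\alpha\}$ is Borel in the Polish space $X\times Z$, because it is the preimage of the Borel set $[-\infty,\alpha)\subseteq [-\infty,\infty]$ under the Borel measurable map $g$. Then I would identify $\{\phi<\alpha\}$ with $p_X(B)$: if $\phi(x)<\alpha$, then by definition of the infimum there is some $z\in Z$ with $g(x,z)<\alpha$, so $(x,z)\in B$ and $x\in p_X(B)$; conversely any $x\in p_X(B)$ admits a witness $z$ with $g(x,z)<\alpha$, which forces $\phi(x)\leq g(x,z)<\alpha$.

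Finally, since $p_X$ is continuous and in particular Borel measurable, and $B$ is a Borel subset of the Polish space $X\times Z$, the set $p_X(B)=\{\phi<\alpha\}$ is by definition analytic in $X$. There is no real obstacle here; the only thing to double-check is the quantifier manipulation $\inf_z g(x,z)<\alpha\iff \exists z:\ g(x,z)<\alpha$, which is immediate from the definition of infimum (and does \emph{not} require attainment, which is why we get $<\alpha$ rather than $\leq\alpha$ on both sides). The statement covers $\alpha=\pm\infty$ trivially, since $\{\phi<-\infty\}=\emptyset$ and $\{\phi<\infty\}=p_X[\{g<\infty\}]$ are both analytic.
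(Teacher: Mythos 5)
Your proof is correct and is essentially identical to the paper's: both rewrite $\{\phi<\alpha\}$ as the projection $p_X[\{g<\alpha\}]$ of a Borel subset of the Polish space $X\times Z$ and invoke the definition of analyticity recalled just before the lemma. The extra care you take with the quantifier equivalence and the endpoint cases $\alpha=\pm\infty$ is fine but not needed beyond what the paper's two-line argument already contains.
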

\begin{proof}
We have
\begin{align}
\phi(x)<\alpha\ \Longleftrightarrow\ \exists z, g(x,z)< \alpha.
\end{align} Thus $\{\phi<\alpha\}=p_X[\{g(x,z)<\alpha\}]$.
\end{proof}

 Given a Borel measure $\mu$ on $X$, we
denote its completion by $\tilde \mu$. By a result of Luzin (see for
instance \cite[Theorem 21.10]{Kech95}) every analytic set $A\subset
X$ is the disjoint union of a Borel set and a $\tilde \mu$-null-set. This will allow us to replace a function which only
satisfies that the sets $\{\phi<\alpha\}, \alpha\in [-\infty,
\infty]$ are analytic by a Borel measurable function.

\begin{lemma}\label{borelversion} Let $X$ be a Polish space and $\mu$ a finite Borel measure on $X$.
If $\phi: X\to [-\infty, \infty]$ satisfies that
$\{\phi(x)<\alpha\}$ is analytic for each $\alpha\in
[-\infty,\infty]$, then there exists a Borel measurable function
$\tilde{\phi}: X\to[-\infty,\infty]$ such that $\tilde{\phi}
\leq\phi$ everywhere and $\phi=\tilde{\phi}$ almost everywhere with
respect to $\tilde \mu$.
\end{lemma}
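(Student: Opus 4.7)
The plan is to approximate each sublevel set $A_q := \{\phi < q\}$, $q\in\mathbb{Q}$, by a Borel superset with null excess, and then read $\tilde\phi$ off from these approximations. Applying the Luzin decomposition to $A_q$ produces a Borel set $B_q\subseteq A_q$ and a $\tilde\mu$-null remainder $N_q = A_q\setminus B_q$; enlarging $N_q$ to a Borel $\tilde\mu$-null set $N_q'$ yields a Borel set $C_q := B_q\cup N_q'\supseteq A_q$ with $\tilde\mu(C_q\setminus A_q)=0$. To encode the monotonicity $A_{q_1}\subseteq A_{q_2}$ for $q_1\le q_2$, I would replace $C_q$ by
\[
C_q^{\ast} := \bigcap_{r\in\mathbb{Q},\, r\ge q} C_r,
\]
which is Borel, increasing in $q$, still contains $A_q$ (since $A_q\subseteq A_r\subseteq C_r$ for every rational $r\ge q$), and is contained in $C_q$, so that $\tilde\mu(C_q^{\ast}\setminus A_q)=0$ as well.

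I would then define
\[
\tilde\phi(x) := \inf\{q\in\mathbb{Q}: x\in C_q^{\ast}\},
\]
with the convention $\inf\emptyset = \infty$. Borel measurability of $\tilde\phi$ is immediate from $\{\tilde\phi<q\}=\bigcup_{r\in\mathbb{Q},\, r<q}C_r^{\ast}$. The pointwise bound $\tilde\phi\le\phi$ follows because $\phi(x)<q$ forces $x\in A_q\subseteq C_q^{\ast}$, so $\tilde\phi(x)\le q$; taking the infimum over rational $q>\phi(x)$ yields $\tilde\phi(x)\le\phi(x)$ for every $x\in X$, including the values $\pm\infty$.

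For the $\tilde\mu$-a.e.\ equality I would argue contrapositively. If $\tilde\phi(x)<\phi(x)$, pick rationals $r<q$ with $\tilde\phi(x)<r<q<\phi(x)$; by definition of $\tilde\phi$ one may arrange that $x\in C_r^{\ast}\subseteq C_r$, while $\phi(x)>r$ shows $x\notin A_r$. Thus
\[
\{\tilde\phi<\phi\}\ \subseteq\ \bigcup_{r\in\mathbb{Q}}(C_r\setminus A_r),
\]
a countable union of $\tilde\mu$-null sets. The only mild subtlety is the initial passage from the \emph{lower} Borel approximation supplied by Luzin's theorem to the \emph{upper} Borel approximation $C_q\supseteq A_q$ used throughout; this is handled by the standard enlargement of the null remainder to a Borel null set as in the first step. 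Everything else is a routine monotonization and definition of $\tilde\phi$ via its sublevel sets.
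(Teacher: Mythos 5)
Your argument is correct and there is no gap. It rests on the same key facts as the paper's proof --- Luzin's decomposition of each analytic sublevel set $\{\phi<q\}$, $q\in\mathbb{Q}$, into a Borel set plus a $\tilde\mu$-null set, together with the observation that the countably many null discrepancies can be absorbed into a single null set --- but the two proofs assemble $\tilde\phi$ differently. The paper covers all the null remainders $N_q$ by one Borel null set $N$ and defines $\tilde\phi$ by leaving $\phi$ untouched off $N$ and setting it to $-\infty$ on $N$; Borel measurability is then immediate because each sublevel set of $\tilde\phi$ differs from the corresponding Borel piece $B_q$ only within $N$. You instead form Borel hulls $C_q\supseteq\{\phi<q\}$ with null excess, monotonize them, and define $\tilde\phi$ through its sublevel sets, with the a.e.\ equality coming from $\{\tilde\phi<\phi\}\subseteq\bigcup_{r\in\mathbb{Q}}(C_r\setminus A_r)$. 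Both routes are equally rigorous: the paper's is slightly more economical (one null set, one pointwise modification, and it makes explicit that $\tilde\phi$ differs from $\phi$ only on a single null set where it equals $-\infty$), while yours is the more canonical ``define a measurable function by prescribing its sublevel sets'' construction and never has to manipulate the value $-\infty$ pointwise.
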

\begin{proof}
Let $(I_n)_{n=1}^{\infty}$ be an enumeration of the intervals
$[-\infty,\alpha),\alpha\in\Q$. Then for each $n\in \N$,
$\phi^{-1}[I_n]$ is $\tilde{\mu}$-measurable and hence the union of
a Borel set $B_n$ and a $\tilde{\mu}$-null set $N_n$. Let $N$ be a
Borel null set which covers $\bigcup_{n=1}^{\infty} N_n$. Let
$\tilde{\phi}(x)=\phi(x)-\infty\cdot \mathbf{1}_N(x)$. Clearly
$\tilde{\phi}(x) \le \phi(x)$ for all $x\in X$ and
$\phi(x)=\tilde{\phi}(x)$ for $\tilde\mu$-almost all $x\in X$.
Furthermore, $\tilde{\phi}$ is Borel measurable since
$(I_n)_{n=1}^{\infty}$ is a generator of the Borel $\sigma$-algebra
on $[-\infty,\infty)$ and for each $n\in \N$ we have that
$\tilde{\phi}\me [I_n]=B_n\setminus N$ is a Borel set.
\end{proof}

\begin{lemma}\label{Connectedness}
Assume that in the setting of Proposition \ref{SandwichTheorem} we have that
\begin{align}
\tilde\mu(p_X[\{\unlc >-\infty\}\setminus (X\times N)])=1
\end{align}
 for every $\nu$-null-set $N\subseteq Y$. Then there exist $x_1\in X$ and a Borel set $X'\subseteq X$ with $\mu(X')=1$ such that for each $x\in X'$ there are $y_1,y\in Y$ satisfying
\begin{align}
\unlc(x_1,y_1)>-\infty, & \quad \ovlc(x,y_1)<\infty,\\
\unlc(x,y)>-\infty, & \quad \ovlc(x_1,y)<\infty.
\end{align}
\end{lemma}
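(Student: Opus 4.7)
The plan is to set $A:=\{\ovlc<\infty\}$ and $B:=\{\unlc>-\infty\}$, both Borel subsets of $X\times Y$, and to locate a base point $x_1$ together with an auxiliary point $y^*$ that serve to satisfy both rows of inequalities simultaneously. Throughout I write $A_x=\{y:(x,y)\in A\}$, $A^y=\{x:(x,y)\in A\}$, and analogously for $B$.

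First I would exploit that $\mu\otimes\nu(A)=1$. By Fubini, the Borel sets
\[ X^{**}:=\{x\in X:\nu(A_x)=1\},\qquad Y^*:=\{y\in Y:\mu(A^y)=1\} \]
are $\mu$- and $\nu$-conull respectively. Next I would feed the $\nu$-null set $N:=Y\setminus Y^*$ into the hypothesis; this yields that $X^*:=p_X[B\cap(X\times Y^*)]$ is an analytic subset of $X$ with $\tilde\mu(X^*)=1$. Since $X^*\cap X^{**}$ then has full $\tilde\mu$-measure it is non-empty, and I pick any $x_1\in X^*\cap X^{**}$. From $x_1\in X^*$ I can in turn select some $y^*\in B_{x_1}\cap Y^*$. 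This $y^*$ is the heart of the construction: because $y^*\in B_{x_1}$ one has $\unlc(x_1,y^*)>-\infty$, while $y^*\in Y^*$ makes $A^{y^*}=\{x:\ovlc(x,y^*)<\infty\}$ a Borel $\mu$-conull subset of $X$. Thus the constant choice $y_1:=y^*$ uniformly handles the first row of inequalities for every $x\in A^{y^*}$.

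For the second row I use $x_1\in X^{**}$, which makes $N_1:=Y\setminus A_{x_1}$ a $\nu$-null set; a second application of the hypothesis shows that $\tilde X:=p_X[B\cap(X\times A_{x_1})]$ is analytic with $\tilde\mu(\tilde X)=1$. By the Luzin-type fact about analytic sets used already in the proof of Lemma~\ref{borelversion}, there is a Borel set $X_2'\subseteq\tilde X$ with $\mu(X_2')=1$. Setting $X':=A^{y^*}\cap X_2'$ produces a Borel set with $\mu(X')=1$; for each $x\in X'$ the choice $y_1=y^*$ together with any $y\in B_x\cap A_{x_1}$ (whose existence is encoded in $x\in\tilde X$) verifies both rows of inequalities simultaneously.

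The main obstacle is measure-theoretic rather than combinatorial: the projections $X^*$ and $\tilde X$ are only analytic, not Borel, so the whole argument has to live in the completion $\tilde\mu$ until the very last moment, when analytic projections of full outer measure are traded for Borel subsets of full $\mu$-measure via Luzin's theorem. Once the two correct $\nu$-null sets $Y\setminus Y^*$ and $Y\setminus A_{x_1}$ have been isolated to feed into the hypothesis, the hypothesis does essentially all the remaining work.
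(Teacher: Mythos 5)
Your proposal is correct and follows essentially the same route as the paper: Fubini gives the conull sets $X^{**}$ (the paper's $X_1$) and $Y^*$ (the paper's $Y_1$), one application of the hypothesis with $N=Y\setminus Y^*$ produces $x_1$ and the fixed point $y_1=y^*$, and a second application with the $\nu$-null complement of $A_{x_1}$ (the paper uses $Y\setminus Y'$ with $Y'=Y_1\cap A_{x_1}$, an immaterial difference) yields the conull set of $x$ admitting the companion point $y$. Your explicit invocation of Luzin's theorem to pass from the analytic projection of full $\tilde\mu$-measure to a Borel $X'$ of full $\mu$-measure is a point the paper glosses over but is handled correctly here.
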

\begin{proof}
Set
\begin{align}
X_1 & =\{x:\nu(\{y:\ovlc(x,y)<\infty\})=1\}\\
Y_1 & =\{y:\mu(\{x:\ovlc(x,y)<\infty\})=1\}.
\end{align}
By Fubini's Theorem, $\mu(X_1)=\nu(Y_1)=1$. Since $\tilde\mu(p_X[\{\unlc >-\infty\}\cap (X\times Y_1)])=1$, there exist $x_1\in X_1, y_1\in Y_1$ such that $\unlc(x_1,y_1)>-\infty$. Since $x_1\in X_1$, 
\begin{align}
Y':=\{y\in Y_1:\ovlc(x_1,y)<\infty\}
\end{align} has $\nu$-measure one. Consequently
\begin{align} X' :=\{x:\ovlc(x,y_1)<\infty\}\cap p_X [\{\unlc >-\infty\}\cap (X\times Y')]\end{align}
has full  $\tilde\mu$-measure. It remains to check that the assertions of Lemma \ref{Connectedness} are satisfied. Choose $x\in X'$. By definition of $X'$, $\ovlc(x,y_1)<\infty$. Since $x\in p_X [\{\unlc >-\infty\}\cap (X\times Y')] $, there exists some $y\in Y'$ such that $\unlc (x,y)>-\infty$. Since $y\in Y'$, $\ovlc(x_1,y)<\infty$.
\end{proof}

\begin{proof}[Proof of Proposition \ref{SandwichTheorem}]
Note that it is sufficient to define $\phi$ and $\psi $ on Borel
sets $X'\subseteq X,Y'\subseteq Y$ with $\mu(X')=\nu(Y')=1$, since
they can then be extended to $X$ and $Y$ by setting them $-\infty$
on the null-sets $X\setminus X', Y\setminus Y'$. This will be used
several times in the course of the proof.

Next we will show  that  it is sufficient to consider the case that
$\tilde\mu(p_X[\{\unlc >-\infty\}\setminus (X\times N)])=1$ for
every $\nu$-null-set $N\subseteq Y$, such that Lemma
\ref{Connectedness} is applicable. Set
\begin{align}
\beta:=\inf \{\tilde\mu(p_X[\{\unlc >-\infty\}\setminus (X\times
N)]):N\subseteq Y, \nu(N)=0 \}.
\end{align}
Choose for each $k\geq 1$ a $\nu$-null-set $N_k\subseteq Y$ such that
\begin{align}
\beta +\frac1k\geq\tilde\mu(p_X[\{\unlc >-\infty\}\setminus (X\times N_k)])
\end{align}
Set  $N:=\bigcup_{k=1}^\infty N_k, Y'=Y\setminus N$ and let $M\subseteq p_X[\{\unlc >-\infty\}\setminus (X\times N)]$ be a Borel set with $\mu(M)=\beta$. Then
$\tilde\mu(M\cap p_X[\{\unlc >-\infty\}\setminus (X\times N')])=\beta $ for every $\nu$-null-set $N'\subseteq Y'$ and it is sufficient to define $\phi,\psi$ on $M$ and $Y'$; as above they can then
be extended to $X$ and $Y$ by setting them $-\infty$ on $X\setminus M$ resp.\ $N$. Thus we may assume without loss of generality  that $M=X$ or, equivalently,  that $p_X[\{\unlc >-\infty\}\setminus (X\times N)]$ has full measure for every $\nu$-null-set $N\subseteq Y'$.

Choose $x_1$ and $X'$ according to Lemma \ref{Connectedness} and set

\begin{align}
\phi_n(x)=\inf \left\{\sum_{i=1}^n \ovlc(x_{i+1},y_i)-\unlc(x_i,y_i):x_{n+1}=x\right\}
\end{align}
\begin{align}\mbox{ and } \phi(x)=\inf_{n\geq1} \phi_n(x).
\end{align}

 Fix  $x\in X'$. To see that $-\infty<\phi(x)<\infty$, pick $y,y_1\in Y'$ according to Lemma \ref{Connectedness} such that $\unlc(x_1,y_1)>-\infty,  \ovlc(x,y_1)<\infty,
\unlc(x,y)>-\infty, \ovlc(x_1,y)<\infty$.  Then 
\begin{align}\phi(x)\leq\phi_1(x)\leq \ovlc(x,y_1)-\unlc(x_1,y_1)<\infty.\end{align} To prove the lower bound, we pick $n\geq 1$ and $x_2,\ldots, x_n\in X', y_1,\ldots, y_n\in Y'$ and set $x_{n+1}=x, y_{n+1}=y, x_{n+2}=x_1$. By (\ref{SandwichMonotonicity}),
\begin{align}
\sum_{i=1}^{n+1} \ovlc(x_{i+1},y_i)-\unlc (x_i,y_i) & \geq 0\\
\mbox{whence}\quad \sum_{i=1}^{n} \ovlc(x_{i+1},y_i)-\unlc (x_i,y_i) & \geq \underbrace{-\big[\ovlc(x_{n+2},y_{n+1})-\unlc (x_{n+1},y_{n+1})\big]}_{=-\big[\ovlc(x_1,y)- \unlc(x,y)\big]}.
\end{align}
Taking the infimum over all possible choices of  $n\geq 1$ and $x_2,\ldots, x_n\in X', y_1,\ldots, y_n\in Y'$, we achieve that $\phi(x) \geq-\big[\ovlc(x_1,y)- \unlc(x,y)\big]>-\infty.$

\medskip

Next observe that for $x,x'\in X', y\in Y'$ and $n\geq 1$
\begin{align}
\phi_{n+1}(x)\leq &\, \inf \left\{\sum_{i=1}^{n+1} \ovlc(x_{i+1},y_i)-\unlc(x_i,y_i):x_{n+2}=x,x_{n+1}=x', y_{n+1}=y\right\}\\
= &\, \phi_n(x')+ \big[\ovlc(x,y)-\unlc(x',y)\big]\label{RelevantInequality}
\end{align}
Taking the infimum over $n\geq 1$ yields that
\begin{align}
\phi(x)\leq \inf_{n\geq 1} \phi_{n+1}(x) & \leq \phi(x')+ \big[\ovlc(x,y)-\unlc(x',y)\big]\\
 \Longrightarrow \qquad \unlc(x',y)-\phi(x') & \leq \ovlc (x,y)-\phi(x).
\end{align}

At this point we will take care about measurability of $\phi$. First apply
Lemma \ref{InfIsMeasurable} to the spaces $X'$ and $Z=\bigcup_{n=1}^\infty (Y'\times X)^n$ to see that $\{\phi<\alpha\}$ is analytic for each $\alpha\in [-\infty,\infty]$.
Then  we may shrink
 $X'$ a little bit to achieve, by  Lemma \ref{borelversion}, that $\phi$ is
even Borel measurable.  Note that (\ref{RelevantInequality}) is then still valid for all $x,x'\in X',y\in Y'$.
\medskip

 By Fubini's Theorem, for $\nu$-almost all $y\in Y'$ there exists some $x\in X$ such that $\ovlc(x,y)<\infty$. By shrinking $Y'$ a little bit if necessary, we may assume that this is the case for all $y\in Y'$. Then the function
\begin{align}\label{SecondRelevant}
\psi(y):=\inf_{x\in X'} \ovlc(x,y)-\phi(x)
\end{align}
is finitely valued on $Y'$. As above, we apply Lemma
 \ref{InfIsMeasurable} and Lemma \ref{borelversion} and shrink
 $Y'$ a little further to achieve that $\psi$ is Borel measurable.
Moreover
\begin{align}
\unlc(x,y)\leq \phi(x)+\psi (y)\leq \ovlc (x,y)
\end{align}
holds for all $x\in X',y\in Y'$ by (\ref{RelevantInequality}) and (\ref{SecondRelevant}).
\end{proof}

\begin{proof}[Proof of Theorem \ref{DualMaximizers}] Pick an arbitrary finite transport plan $\pi\in\Pi(\mu,\nu)$. Choose  a subsidy function $f:X\times Y\to[0,\infty]$ according to Proposition \ref{MonotonicityViaOptimality} such $\int f\,d\pi\leq I_c[\pi]-I_c$. Set $\ovlc(x,y)=c(x,y)$ and
\begin{align}
\unlc(x,y):=\left\{
\begin{array}{cl}
c(x,y)-f(x,y) &\mbox{if $c(x,y)\neq \infty$}\\
-\infty &\mbox{if $c(x,y)= \infty$}
\end{array}\right.,
\end{align}
in particular  $\unlc=c-f$ holds $\pi$-a.e.\ since $c$ is
$\pi$-a.e.\ finite. For all $x_1,\ldots, x_n\in X$ and $y_1,\ldots,
y_n\in Y,$
\begin{align}
&\sum_{i=1}^n \ovlc(x_{i+1},y_i)-\unlc(x_i,y_i)\geq\\
&\sum_{i=1}^n c(x_{i+1},y_i)+f(x_i,y_i)-c(x_i,y_i)\geq 0,
\end{align}
 Thus by Proposition \ref{SandwichTheorem}, there exist functions $\phi:X\to [-\infty,\infty)$, $\psi:Y\to [-\infty, \infty)$ such that $\phi(x)+\psi(y)\leq \ovlc(x,y)=c(x,y)$ for all $x\in X,y\in Y$ and $\phi(x) + \psi(y)\geq \unlc (x,y)=c(x,y)-f(x,y)$ for $\pi$-almost all $(x,y)\in X\times Y$. This implies
\begin{align}
I_c\geq J(\phi,\psi)\geq \int c(x,y)-f(x,y)\, d\pi(x,y)\geq  I_c[\pi]-\big(I_c[\pi]-I_c\big)= I_c,
\end{align} thus $I_c=J(\phi,\psi)$ and hence  $\phi,\psi$ are dual maximizers.
\end{proof}

To conclude the proof of Theorem \ref{BorelDuality} we show Lemma \ref{IndependenceOfPi}.

\begin{proof}[Proof of Lemma \ref{IndependenceOfPi}.]
Similarly as in \cite{ScTe08} we define
\begin{align*}
\phi_{n}(x)  &  =(-n\lor\phi(x))\land n,\\
\psi_{n}(y)  &  =(-n\lor\psi(y))\land n,\\
\xi_{n}(x,y)  &  =\phi_{n}(x)+\psi_{n}(y),\\
\xi(x,y)  &  =\phi(x)+\psi(y),
\end{align*}
for $(x,y)\in X\times Y$ and $n\geq1$. Observe that
$\xi_{n}\uparrow\xi$ on $\{\xi\geq0\}$ and $\xi_{n}\downarrow\xi$ on
$\{\xi\leq0\}$, as $n\to\infty$. Moreover $\xi_n\leq c$ so that
$(\phi_n,\psi_n)\in\Phi(\mu,\nu)$.

Additionally, $ \int\xi\,d\pi \leq \int c\,d\pi, \int \xi \,d
\tilde\pi<\int c\,d\tilde\pi$ exist, taking possibly the value
$-\infty$, since $\xi\leq c$ holds $\pi$-almost surely  as well as
$\tilde\pi$-almost surely and we assume that $\int c\,d\pi,\int c\,
d\tilde \pi<\infty$. By the assumption on equal marginals of $\pi$
and $\tilde\pi$ we obtain
\begin{align}
\int \xi_n\,d\pi  &  =\int \phi_{n}\, d\pi+\int \psi_{n}\,d\pi\\
&  =\int \phi_{n}\, d\tilde\pi+\int \psi_{n}\,d\tilde\pi\\
&  =\int\xi_{n}\,d\tilde\pi,
\end{align}
for $n\geq0$, hence
\begin{align}
\int\xi_{n}1_{\{\xi\geq0\}}+\xi_{n}1_{\{\xi\leq0\}}\,d\pi = \int \xi
_{n}1_{\{\xi\geq0\}}+\xi_{n}1_{\{\xi\leq0\}}\,d\tilde\pi.
\end{align}
By our previous considerations we can pass to the limits and obtain
$\int\xi\,d\pi=\int\xi\,d\tilde\pi$. Indeed the limits are monotone
on $\{\xi\geq0\}$ and on $\{\xi\leq0\}$ and the convergence is
dominated by $c$ on $\{\xi\geq 0\}$. Hence the limits of $\int
\xi_{n}\,d\pi=\int\xi_{n}\,d\tilde\pi$ exist as $n\to\infty$ and are
equal. Consequently $\int\xi\,d\pi=\int \xi\,d\tilde\pi$.
\end{proof}

\section{Examples}
We start with a simple example which shows that Monge-Kantorovich
duality does not hold in general for a measurable cost function
$c:X\times Y\to[0,\infty]$.

\begin{example}\label{DualityGapExample}\label{ZeroOneInfty}
Let $X=Y=[0,1]$, $\mu=\nu$ the Lebesgue-measure and set
\begin{align}
c(x,y)=\left\{
\begin{array}{cl}
\infty &\mbox{ for }x<y\\
1&\mbox{ for }x=y\\
0&\mbox{ for }x>y
\end{array}\right.
\end{align} for $(x,y)\in X\times Y$.
The optimal (and in fact the only finite) transport plan $\pi$ is
concentrated on the diagonal and yields costs of one. Assume that
$\phi,\psi:[0,1]\to [-\infty,\infty)$ are integrable functions
satisfying $\phi(x)+\psi(y)\leq c(x,y)$ for all $x,y\in [0,1]$. Then
\begin{align}
\int \phi\,d\mu+\int \psi\,d\nu &= \lim_{\alpha\downarrow
0}\int_0^{1-\alpha}\phi(x+\alpha)+\psi(x)\,dx\\
&\leq\lim_{\alpha\downarrow 0}\int_0^{1-\alpha}c(x+\alpha,x)\,dx=0.
\end{align}
Thus there is a duality gap, i.e.,
\begin{align}
1=I_c>\sup\{J(\phi,\psi):(\phi,\psi)\in \Phi(\mu,\nu)\}=0.
\end{align}
Note also that $I_c$ fails to be continuous in the sense of Corollary \ref{IcContinuous}: For each $n\geq 1$, there exist transport plans assigning arbitrarily small costs to the function  $c\wedge n$ such that $\lim_{n\to \infty}I_{c\wedge n}= 0\neq 1 =I_c$.
\end{example}

In \cite[Example 5.3]{BGMS08} a certain  variant of Example
\ref{DualityGapExample} is considered. By setting $c(x,y)=\sqrt{x-y}
$ for $x>y$, the cost function becomes lower semi-continuous. In
this case duality does hold true, but there are no dual maximizers,
that is, the optimal transport plan $\pi$ is not strongly
$c$-cyclically monotone. We present here yet another variant of
 Example \ref{DualityGapExample} which shows that dual
maximizers need not exist, even if the cost function is assumed to
be continuous.

\begin{example}\label{ContinuousNoMaximizers}
Let $X=Y=\N\cup\{\omega\}$ where we take  $\omega$ to be a
``number'' larger than all $n\in \N$.  Equip $X$ and $Y$ with the discrete
topology and define $\mu= \nu$ such that positive measure is assigned
to each point in $X$, resp.\ $Y$. Set
\begin{align}
c(x,y)=\left\{
\begin{array}{cl}
\infty &\mbox{ for }x<y\\
1&\mbox{ for }x=y\\
0&\mbox{ for }x>y
\end{array}\right.
\end{align}  for $(x,y)\in X\times Y$.  As above we find that the only finite transport plan $\ppi$ is concentrated on the diagonal and yields costs of $1$. Since $X$ and $Y$ are discrete spaces, $c$ is continuous  with respect to the product topology on $X\times Y$ and hence duality holds true.  Striving for a contradiction, we assume that there exist dual maximizers $\pphi,\ppsi:\N\cup\{\omega\}\to [-\infty,\infty)$. Note that $\pphi$ and $\ppsi$ are necessarily finitely valued  since $X$ and $Y$ have no non trivial null-sets. Since $\pphi,\ppsi$ witness that $\ppi$ is strongly $c$-monotone we have
\begin{align}
\pphi(n)+\ppsi(n)=1, \pphi(n+1)+\ppsi(n)\leq 0, \mbox{ and } \pphi(\omega)+\ppsi(n)\leq 0
\end{align} for each $n\in \N$. This yields
\begin{align}
\pphi(n+1)\leq \pphi(n)-1 , \mbox{ and } \pphi(\omega)\leq \pphi(n)-1
\end{align} for all $n\in \N$  which is impossible for a finitely valued function.
\end{example}

One can try to overcome the difficulties encountered in Examples
\ref{DualityGapExample}, \ref{ContinuousNoMaximizers} and
\cite[Example 5.3]{BGMS08} by admitting dual optimizers from a
larger class of mappings: Consider functions   $\phi:X\to
[-\infty,\infty),\psi:Y\to [-\infty,\infty)$  which do not
necessarily satisfy the inequality $\phi(x)+\psi (y) \leq c(x,y)$
for \emph{all} $(x,y)\in X\times Y$ but do satisfy the potentially
weaker condition
\begin{align}\label{WeakCondition}
\mbox{$\phi(x)+\psi(y)\leq c(x,y)$, $\pi$-a.e.\ for
   every \emph{finite} transport plan $\pi\in \Pi(\mu,\nu)$.}
\end{align}
It follows then that $J(\phi,\psi)=\int_{X\times Y} \phi(x)+\psi(y)\, d\pi(x,y)\leq I_c[\pi]$ for each finite transport plan $\pi\in \Pi(\mu,\nu)$, such that
\begin{align}
J(\phi,\psi)\leq \inf_{\pi\in\Pi(\mu,\nu)} I_c[\pi]=I_c.
\end{align}
 Therefore it is reasonable to consider all pairs of functions $(\phi,\psi)$ satisfying (\ref{WeakCondition}) as admissible solutions of the dual part of the Monge-Kantorovich problem.
In particular a transport plan $\pi$ is optimal provided that there exist
measurable (not necessarily integrable) functions $\phi:X\to
[-\infty,\infty),\psi:Y\to[-\infty,\infty)$ satisfying (\ref{WeakCondition}) and
\begin{align}\label{ItMaximizes}
\int_{X\times Y} \big[\phi(x)+\psi(y)\big] \,d
\pi(x,y)=\int_{X\times Y} c\,d\pi.
\end{align}
Clearly, (\ref{ItMaximizes}) is tantamount to requiring that
 $\phi(x)+\psi(y)=c(x,y)$ for $\pi$-almost all $(x,y)$.
Observe that dual optimizers in this weak sense exist  in Examples \ref{DualityGapExample} and \ref{ContinuousNoMaximizers}.
Since the only finite transport plan is the optimal one, we may just take $\phi\equiv \psi \equiv 1/2$. However the subsequent construction (which is a variation
of \cite[Example 3.5]{AmPr03}) shows that, in general, dual
optimizers do not even exist in this weak sense.
\begin{example}\label{ExAmPra}
Let $X=Y= [0,1]$, equipped with Lebesgue measure $\lambda=\mu=\nu$.
Pick $\alpha\in [0,1)$ irrational. Set $$\Gamma_0=\{(x,x):x\in
X\},\quad\Gamma_1=\{(x,x\oplus\alpha):x\in X\},$$ where $\oplus$ is
addition modulo $1$. Define $c: X\times Y \to \Rop$ by
\begin{align}
c(x,y)=\left\{
\begin{array}{cl}
0&\mbox{ for }(x,y)\in\Gamma_0, x\in [0,1/2]\\
2&\mbox{ for }(x,y)\in\Gamma_0, x\in (1/2,1)\\
1&\mbox{ for }(x,y)\in\Gamma_1\\
\infty&\mbox{ else }
\end{array}\right..
\end{align} Note that $c$ is lower semi-continuous whence duality holds (cf.\ \cite[Theorem 2.2]{Kell84}).
For $i=0,1$, let $\pi_i$ be the obvious transport plan concentrated
on $\Gamma_i$. Then all finite transport plans are given by  convex
combinations of the form $\rho\pi_0+(1-\rho)\pi_1, \rho\in[0,1]$ and
each of these transport plans leads to costs of $1$. Assume that
$\pphi,\ppsi:[0,1)\to [-\infty,\infty)$ are measurable functions
which satisfy
\begin{enumerate}
\item $J(\pphi,\ppsi)=1$,
\item $\pphi(x)+\ppsi(y)\leq c(x,y)$ for $\pi_0$- and $\pi_1$-almost all $(x,y)$.
\end{enumerate}
This implies that, in fact, $\pphi(x)+\ppsi(y)=c(x,y)$ for $\pi_0$-
and $\pi_1$-almost all $(x,y)$. Thus
\begin{align}
\pphi(x)+\ppsi(x)=\left\{
\begin{array}{cl}
0&\mbox{ for } x\in [0,1/2]\\
2&\mbox{ for } x\in (1/2,1)
\end{array}\right., \quad \pphi(x)+\ppsi(x\oplus\alpha)= 1
\end{align}
\begin{align}\label{LotsOfVariation}
\mbox{whence}\quad  \ppsi(x\oplus\alpha)=\left\{
\begin{array}{cl}
\ppsi(x)+1&\mbox{ for } x\in [0,1/2]\\
\ppsi(x)-1&\mbox{ for } x\in (1/2,1)
\end{array}\right.
\end{align}
for all $x\in I$, where $I\subseteq [0,1]$ is a Borel set of measure
$1$. By passing to a subset of full measure, we may additionally
assume that $I\oplus\alpha=I$.  Pick a set $B\subseteq I$ such that
$\lambda(B)>0$ and $\sup_{x\in B} \ppsi(x)-\inf_{x\in B} \ppsi(x)<1$.
By a classic theorem of Steinhaus, $B-B$ contains a non-empty open set.
Since $\alpha$ is irrational, $\{(2n+1)\alpha:n\in\mathbb N\}$ is
dense in $(0,1]$, thus there exists  $n\in \mathbb\N$  such that
$x,x\oplus(2n+1)\alpha\in B$. By  (\ref{LotsOfVariation}) this
implies that $|\ppsi(x\oplus(2n+1)\alpha)-\ppsi(x)|\geq 1$,
contradicting the definition of $B$.
\end{example}

We conclude with two examples which show that in general it is  not
possible to find dual maximizers
 $\pphi$ and $\ppsi$ which are
integrable even if $c$ is continuous and finitely valued.
\begin{example}\label{NoIntegrableMaximizers}
Let $X=Y=(0,1]$, take $\mu,\nu$ to be the Lebesgue measure and set
$c(x,y)=|1/x-1/y+1|$. Define $\pi$ to be the transport plan
concentrated on the diagonal such that $I_c[\pi]=1$. The functions
$\pphi(x)=1/x,\ppsi(y)=1-1/y$ witness that $\pi$ is strongly
$c$-cyclically monotone and hence optimal. Recall that by Corollary \ref{ConnectionWithStrongCmon} functions that witness strong $c$-cyclical monotonicity of a transport plan correspond precisely to dual maximizers. In particular, $\pphi$ and $\ppsi$ are dual maximizers. Of course $\pphi$ and $\ppsi $ are not integrable. Let $( \phi, \psi)$ be another pair of dual maximizers. We will see that there is a constant $\beta\in \R$  such that $\pphi= \phi+\beta, \ppsi= \psi-\beta$ almost surely,  hence there exist no integrable dual maximizers.

 Fix $a,b\in(0,1]$ such that $a<b$ and $(1/x-1/y+1)$ is positive on
$[a,b]^2$. Let $\pi_{a,b}$ be the transport plan which equals $\pi$
on $X \times Y\setminus [a,b]^2$ and is $1/(b-a)$ times the product
measure on $[a,b]^2$. As above, $\pphi$ and $\ppsi$ witness that
$\pi_{a,b}$ is strongly $c$-cyclically monotone and thus optimal. But then also $ \phi, \psi$ witness that $\pi_{a,b}$ is strongly $c$-cyclically monotone, hence
\begin{align}\label{LessOrEqual}
 \phi(x)+\psi(y)= c(x,y)=\pphi(x)+\ppsi(y)
\end{align}
for almost  all $(x,y)\in[a,b]^2$. This yields
\begin{align}
(\phi-\pphi)(x)=(\ppsi-\psi)(y),
\end{align}
thus both  sides are almost everywhere in $[a,b]$ equal up to some
constant $\beta_{a,b}$. Since we can cover $(0,1]$ with sufficiently small
overlapping  intervals $ [a_n,b_n],n\geq 1$, we achieve that this
constant $\beta_{a,b}=:\beta$ does in fact not depend on the choice of $a$ and $b$.
It follows that indeed $\phi=\pphi+\beta$ and $ \psi=\ppsi-\beta$ almost everywhere.

%
%

\end{example}

In fact, one can find an example displaying the same phenomenon as
Example \ref{NoIntegrableMaximizers} where $c$ is just the squared
distance on $\R$.

\begin{example}\label{NoIntegrableMaximizers2}
Let $X=Y=\mathbb \R$, let $c(x,y)=(x-y)^2$, let $\mu$ be a Borel probabilty measure on $X$, define $T:X\to Y$ by $T(x)=x+1$ and assume that $\nu=T\push \mu$. Let $\pi\in\Pi(\mu,\nu)$ be the obvious transport plan concentrated on the graph $\Gamma=\{(x,x+1):x\in\R\}$ of $T$. Set $\pphi(x)= -2x, \ppsi(y)=2y-1$. Then
\begin{align}
c(x,y)-\pphi(x)-\ppsi(y)= (x-y)^2-2(x-y)+1=(x-y-1)^2
\end{align}
is non negative for all $(x,y)\in X\times Y$ and $\phi(x)+\psi(y)=c(x,y)$ holds precisely for $(x,y)\in\Gamma$. Thus $\pi$ is strongly $c$-cyclically monotone and the corresponding transport costs $\int_{\Gamma} 1 \, d\pi=1$ are minimal.

We claim that the dual optimizers $\pphi,\ppsi$ are essentially unique if the Lebesgue measure $\lambda $ is absolutely continuous with respect to $\mu$.

Let $\phi,\psi$ be dual optimizers. Since $J(\phi,\psi)=\int_{X\times Y}\phi(x)+\psi(y)\,d\pi=1$, we have $\phi(x) +\psi(y)=c(x,y)=1$ for $\pi$-almost all $(x,y)\in X\times Y$.
Thus there is a set $X'\subset X$ with $\lambda(X\setminus X')=\mu(X\setminus X')=0$ such that
\begin{align}\label{OneByAnother}
\phi(x)+\psi(1+x)=c(x,x+1)=1 \ \Longrightarrow \ \psi(1+x)=1-\phi(x)
\end{align}
for all $x\in X'$. Since $\phi(x)+\psi(1+y)\leq c(x,1+y)=(x-y-1)^2$ for all $x,y\in \mathbb R$, (\ref{OneByAnother}) yields that for all $x\in X,y\in X'$
\begin{align}
\phi(x)+1-\phi(y) \ & \leq (x-y-1)^2\\
\Longrightarrow \quad \big(\phi(x)+2x\big)-\big(\phi(y)+2y\big) \ & \leq (x-y-1)^2-2(x-y-1)+1.\label{LastLine1444}
\end{align} Setting $f(x)=\phi(x)+2x$, (\ref{LastLine1444}) is tantamount to
\begin{align}
f(x)-f(y)\leq(x-y)^2
\end{align} for all $x\in X,y\in X'$. Thus $f$ is constant on $X'$, whence there exists a constant $\beta \in \mathbb R$ such that
\begin{align}\label{TheyAreAllEqual}
\phi(x)= -2x+\beta=\pphi(x), \quad \psi(y)=2y-1+\beta =\ppsi(y).
\end{align}
for $\lambda$- as well as $\mu$-almost all $x\in X$ and $\lambda$- as well as $\nu$-almost all $y\in Y$.

If we pick $\mu$ such that $\lambda $ is absolutely continuous with respect to $\mu$
and such that  $\int_\R x\,  d\mu(x)$ does not exist,  $\pphi$ and $\ppsi$ are not integrable and by (\ref{TheyAreAllEqual}) no other pair of dual maximizers can be integrable either.
\end{example}

\begin{ack}
The authors are grateful to Martin Goldstern, Gabriel Maresch and
Josef Teichmann for many helpful  discussions on the topic of this
paper.
\end{ack}

\def\ocirc#1{\ifmmode\setbox0=\hbox{$#1$}\dimen0=\ht0 \advance\dimen0
  by1pt\rlap{\hbox to\wd0{\hss\raise\dimen0
  \hbox{\hskip.2em$\scriptscriptstyle\circ$}\hss}}#1\else {\accent"17 #1}\fi}


\begin{thebibliography}{BGMS08}

\bibitem[AP03]{AmPr03}
L.~Ambrosio and A.~Pratelli.
\newblock Existence and stability results in the {$L\sp 1$} theory of optimal
  transportation.
\newblock In {\em Optimal transportation and applications (Martina Franca,
  2001)}, volume 1813 of {\em Lecture Notes in Math.}, pages 123--160.
  Springer, Berlin, 2003.

\bibitem[BGMS08]{BGMS08}
M.~Beiglb{\"o}ck, M.~Goldstern, G.~Maresch, and W.~Schachermayer.
\newblock Optimal and better transport.
\newblock {\em arXiv:0802.0646v1}, 2008.

\bibitem[BL92]{BoLe92}
J.~M. Borwein and A.~S. Lewis.
\newblock Decomposition of multivariate functions.
\newblock {\em Canad. J. Math.}, 44(3):463--482, 1992.

\bibitem[DS94]{DeSc94}
F.~Delbaen and W.~Schachermayer.
\newblock A general version of the fundamental theorem of asset pricing.
\newblock {\em Math. Ann.}, 300(3):463--520, 1994.

\bibitem[GM96]{GaMc96}
W.~Gangbo and R.~J. McCann.
\newblock The geometry of optimal transportation.
\newblock {\em Acta Math.}, 177(2):113--161, 1996.

\bibitem[Kec95]{Kech95}
A.~S. Kechris.
\newblock {\em Classical descriptive set theory}, volume 156 of {\em Graduate
  Texts in Mathematics}.
\newblock Springer-Verlag, New York, 1995.

\bibitem[Kel84]{Kell84}
H.~G. Kellerer.
\newblock Duality theorems for marginal problems.
\newblock {\em Z. Wahrsch. Verw. Gebiete}, 67(4):399--432, 1984.

\bibitem[Kel85]{Kell85}
H.~G. Kellerer.
\newblock Duality theorems and probability metrics.
\newblock In {\em Proceedings of the seventh conference on probability theory
  (Bra\c sov, 1982)}, pages 211--220, Utrecht, 1985. VNU Sci. Press.

\bibitem[Kom67]{Koml67}
J.~Koml{\'o}s.
\newblock A generalization of a problem of {S}teinhaus.
\newblock {\em Acta Math. Acad. Sci. Hungar.}, 18:217--229, 1967.

\bibitem[Pra08]{Prat08}
A.~Pratelli.
\newblock On the sufficiency of {$c$}-cyclical monotonicity for optimality of
  transport plans.
\newblock {\em Math. Z.}, 258(3):677--690, 2008.

\bibitem[Roc66]{Rock66}
R.~T. Rockafellar.
\newblock Characterization of the subdifferentials of convex functions.
\newblock {\em Pacific J. Math.}, 17:497--510, 1966.

\bibitem[RR95]{RaRu95}
D.~Ramachandran and L.~R{\"u}schendorf.
\newblock A general duality theorem for marginal problems.
\newblock {\em Probab. Theory Related Fields}, 101(3):311--319, 1995.

\bibitem[RR96]{RaRu96}
D.~Ramachandran and L.~R{\"u}schendorf.
\newblock Duality and perfect probability spaces.
\newblock {\em Proc. Amer. Math. Soc.}, 124(7):2223--2228, 1996.

\bibitem[RR98]{RaRu98}
S.~T. Rachev and L.~R{\"u}schendorf.
\newblock {\em Mass transportation problems. {V}ol. {I}}.
\newblock Probability and its Applications (New York). Springer-Verlag, New
  York, 1998.

\bibitem[R{\"u}s96]{R96}
L.~R{\"u}schendorf.
\newblock On {$c$}-optimal random variables.
\newblock {\em Statist. Probab. Lett.}, 27(3):267--270, 1996.

\bibitem[ST08]{ScTe08}
W.~Schachermayer and J.~Teichmann.
\newblock Characterization of optimal transport plans for the
  {M}onge-{K}antorovich problem.
\newblock {\em Proceedings of the A.M.S.}, 2008.
\newblock in press.

\bibitem[Vil03]{Vill03}
C.~Villani.
\newblock {\em Topics in optimal transportation}, volume~58 of {\em Graduate
  Studies in Mathematics}.
\newblock American Mathematical Society, Providence, RI, 2003.

\bibitem[Vil05]{Vill05}
C.~Villani.
\newblock Optimal transport, old and new.
\newblock Lecture Notes, 2005.

\end{thebibliography}
\end{document}